\theoremstyle{plain}
\newtheorem{thm}{Theorem}[section]
\newtheorem{lem}[thm]{Lemma}
\newtheorem{cor}[thm]{Corollary}
\title{Critical probability on the product graph \\
		of a regular tree and a line}
\author{Kohei Yamamoto}
\date{Tohoku university}
\begin{document}
\maketitle

\begin{abstract}
	We consider Bernoulli bond percolation on the product graph of
	a regular tree and a line.
	Schonmann showed that there are a.s. infinitely many infinite clusters at $p=p_u$
	by using a certain function $\alpha(p)$.
	The function $\alpha(p)$ is defined by a exponential decay rate of probability that
	two vertices of the same layer are connected. 
	We show the critical probability $p_c$ can be written by using $\alpha(p)$.
	In other words, we construct another definition of the critical probability.
\end{abstract}

\section{Introduction}
\label{se:Intro}
	Let $G=(V,E)$ be a connected, locally finite and infinite graph,
	where $V$ is the set of vertices, $E$ is the set of edges.
	In Bernoulli bond percolation, 
	each edge will be open with probability $p$,
	and closed with probability $1-p$ independently, 
	where $p \in [0,1]$ is a fixed parameter.
	Let $\Omega=\{0,1\}^E$ be the set of samples,
	where $\omega(e)=1$ means $e$ is open.
	Each $\omega \in \Omega$ is regarded as a subgraph of $G$ 
	consisting of all open edges.
	The connected components of $\omega$ are referred to as clusters.
	Let $p_c=p_c(G)$ be the critical probability for Bernoulli bond percolation 
	on $G$, that is,
	\[
	p_c= \inf \left\{ p\in[0,1] \mid 
	\text{there exists an infinite cluster almost surely} \right\},
	\]
	and let $p_u=p_u(G)$ be the uniqueness threshold for Bernoulli bond percolation 
	on $G$, that is,
	\[
	p_u= \inf \left\{ p\in[0,1] \mid 
	\text{there exists a unique infinite cluster almost surely} \right\}.
	\]
	One of the most popular graphs in the theory of percolation is 
	the Euclidean lattice $\mathbb{Z}^d$.
	In 1980 Kesten \cite{Kesten} proved that 
	$p_c=1/2$ in the case of two dimensions.
	But in the case of three dimensions or more,
	as a numerical value, the critical probability is not quite clear. 
	Regarding the uniqueness threshold of the Euclidean lattice,
	in 1987 Aizenman, Kesten, and Newman \cite{Aizenman} proved that there exists
	at most one infinite cluster almost surely for all $d \ge 1$,
	that is, they showed that  $p_c=p_u$ for all $d \ge 1$. 
	The product graph of a $d$-regular tree and a line $T_d \Box \mathbb{Z}$
	was presented as a first example of a graph with $p_c < p_u<1$
	by Grimmett and Newman \cite{Grimmett2} in 1990,
	where a product graph means a Cartesian product graph. 
	They showed that $p_c<p_u$ holds when $d$ is sufficiently large.
	After this article had appeared,
	percolation on $T_d \Box \mathbb{Z}$ has become a popular topic.
	However, the critical probability of $T_d \Box \mathbb{Z}$ 
	is, as a value, also not quite clear.
	In this paper we study Bernoulli bond percolation on $T_d \Box \mathbb{Z}$.
	Our goal is to write the critical probability by using a certain function $\alpha(p)$.
	From our theorem, we can consider a numerical value of the critical probability
	by analyzing $\alpha(p)$.
	We denote the probability measure associated 
	with Bernoulli percolation process by $\mathbb{P}_p$ or $\mathbb{P}_p^G$.
	Let $(x \leftrightarrow y)$ be an event 
	that there exists an open path between $x$ and $y$
	for two vertices $x,y \in V$.
	The function $\alpha(p)$ which was appeard in \cite{Schonmann} is defined by
	\[
	\alpha(p)=\alpha_d(p)
	=\lim_{n \to \infty} \mathbb{P}_p(o \leftrightarrow (v_n,0))^{\frac{1}{n}},
	\]
	where $v_n$ is a vertex on $T_d$ with $n$ distance from the origin.
	From a homogeneity of $T_d$, 
	$\alpha(p)$ does not depend on a choice of $v_n$.
	We abbreviate $v_n$ as $n$.
	We check on the existence of a limit.
	From FKG inequality, we have
	\[
	\mathbb{P}_p(o \leftrightarrow (n+l,0))
	\ge \mathbb{P}_p(o \leftrightarrow (n,0)) \mathbb{P}_p(o \leftrightarrow (l,0))
	\]
	for all $n,l \ge 0$.
	By using Fekete's subadditive lemma, the existence of the limit is ensured,
	and we have
	\[
	\alpha(p)
	=\lim_{n \to \infty} \mathbb{P}_p(o \leftrightarrow (n,0))^{\frac{1}{n}}
	=\sup_{n \ge 1} \mathbb{P}_p(o \leftrightarrow (n,0))^{\frac{1}{n}}.
	\]
	Let $B(k)$ be a $k$-ball of $T_d \Box \mathbb{Z}$ whose center is $o$, we have
	\[
	\alpha(p)
	= \sup_{n \ge 1} \sup_{k \ge 1} 
	\mathbb{P}_p^{B(k)}(o \leftrightarrow (n,0))^{\frac{1}{n}},
	\]
	and observe that we are taking the supremum of a continuous function of $p$.
	Therefore $\alpha(p)$ is lower semi-continuous 
	and, since it is clearly non-decreasing,
	it is also left-continuous.
	This function $\alpha(p)$ was first defined by Schonmann \cite{Schonmann}.
	Schonmann showed the following theorem.

	\begin{thm}[\cite{Schonmann}]
	\label{thm:Schonmann}
	Let $p_u$ be the uniqueness threshold. Then we have
	\[
	\alpha(p_u) \le \frac{1}{\sqrt{d-1}}.
	\]
	\end{thm}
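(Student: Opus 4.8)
The plan is to prove the contrapositive in the form of an $L^{2}$ bound on the two-point function and then feed it into a uniqueness criterion. Since $\alpha$ is non-decreasing and left-continuous, and since $\alpha(p)=1$ for every $p>p_{u}$ (under uniqueness FKG gives $\mathbb{P}_{p}(o\leftrightarrow(n,0))\ge\mathbb{P}_{p}(o\leftrightarrow\infty)^{2}$ for all $n$, forcing the decay rate up to $1$), it suffices to show $\alpha(p)\le 1/\sqrt{d-1}$ for every $p<p_{u}$; the value at $p_{u}$ then follows from left-continuity, $\alpha(p_{u})=\sup_{q<p_{u}}\alpha(q)$. Equivalently, I would prove: if $\alpha(p)>1/\sqrt{d-1}$, then there is a.s. a unique infinite cluster, so $p\ge p_{u}$.

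First I would convert the hypothesis $\alpha(p)>1/\sqrt{d-1}$ into divergence of an $L^{2}$-type sum of connectivities. Writing $\tau_{n}=\mathbb{P}_{p}(o\leftrightarrow(n,0))$, the relation $\tau_{n}^{1/n}\to\alpha(p)$ gives, for every $\varepsilon>0$ and all large $n$, the lower bound $\tau_{n}\ge(\alpha(p)-\varepsilon)^{n}$. Since the number of tree-vertices at distance $n$ from the root is $d(d-1)^{n-1}$, restricting to the height-$0$ copy of $T_{d}$ yields
\[
\sum_{x}\mathbb{P}_{p}(o\leftrightarrow x)^{2}
\ge \sum_{n\ge 0} d(d-1)^{n-1}\tau_{n}^{2}
\ge c\sum_{n}\big((d-1)(\alpha(p)-\varepsilon)^{2}\big)^{n}.
\]
Choosing $\varepsilon$ small enough that $(d-1)(\alpha(p)-\varepsilon)^{2}>1$ — possible precisely because $\alpha(p)>1/\sqrt{d-1}$ — this geometric series diverges. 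The threshold $1/\sqrt{d-1}$ thus enters exactly through the competition between the growth rate $d-1$ of the tree and the squared decay rate $\alpha(p)^{2}$ of the connectivity.

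Next I would interpret this divergence probabilistically and push it toward uniqueness. Taking two independent configurations $\omega_{1},\omega_{2}$ with clusters $\mathcal{C}_{1}(o),\mathcal{C}_{2}(o)$, the sum above is the expected size of their overlap,
\[
\mathbb{E}\big[\,|\mathcal{C}_{1}(o)\cap\mathcal{C}_{2}(o)|\,\big]=\sum_{x}\mathbb{P}_{p}(o\leftrightarrow x)^{2}=\infty,
\]
which morally says the connectivity pattern is too rich for distinct infinite clusters to coexist. To make this rigorous I would use the standard machinery for percolation on the transitive nonamenable graph $T_{d}\Box\mathbb{Z}$: the number of infinite clusters is a.s. $0$, $1$, or $\infty$, so it suffices to exclude $\infty$. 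Combining insertion tolerance with a Paley–Zygmund/second-moment estimate built from the divergent sum, I would show that two vertices lying deep in different branches of the tree, conditioned to be in infinite clusters, lie in a common cluster with probability bounded away from $0$, which is incompatible with infinitely many clusters.

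The main obstacle is exactly this last step: a divergent $L^{2}$ sum is a statement about expectations, and extracting a uniform positive lower bound on the merging probability of two far clusters requires controlling the variance, i.e. the higher-order correlations of the event that many level-$n$ vertices connect to the origin. Here the geometry of $T_{d}\Box\mathbb{Z}$ must be handled with care: the dominant second-moment contributions come from pairs of vertices whose tree-geodesics to $o$ separate early, and one must verify — through a BK/tree-surgery decomposition that splits a connection at the most recent common ancestor while controlling the free wandering in the $\mathbb{Z}$-direction — that these do not overwhelm the square of the first moment. Establishing this bounded-variance estimate, so that the $\sqrt{d-1}$ threshold survives the passage from expectation to probability, is the crux of the argument.
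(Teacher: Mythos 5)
You are trying to prove a statement that the paper itself does not prove: Theorem \ref{thm:Schonmann} is quoted from Schonmann \cite{Schonmann}, whose argument (building on Grimmett--Newman \cite{Grimmett2}) has exactly the skeleton you chose --- prove that $\alpha(p)>1/\sqrt{d-1}$ forces a.s.\ uniqueness at $p$, hence $\alpha(p)\le 1/\sqrt{d-1}$ for all $p<p_u$, and conclude at $p_u$ by left-continuity of $\alpha$. Your reduction is sound as far as it goes: the left-continuity step is legitimate (the paper establishes it in Section \ref{se:Intro}), the lower bound $\tau_n\ge(\alpha(p)-\varepsilon)^n$ for large $n$ follows from Fekete, and the divergence of $\sum_n d(d-1)^{n-1}(\alpha(p)-\varepsilon)^{2n}$ when $(d-1)(\alpha(p)-\varepsilon)^2>1$ correctly identifies where the threshold $1/\sqrt{d-1}$ comes from. (The side remark that $\alpha(p)=1$ for $p>p_u$ silently uses monotonicity of the uniqueness phase, a nontrivial theorem, but your argument never actually needs it.)

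The genuine gap is that everything after the divergence computation is a declaration of intent rather than a proof, and it covers the two steps that constitute essentially all of the difficulty. First, divergence of $\sum_x \tau_p(o,x)^2$ is a first-moment statement and is strictly weaker than uniqueness in general; to get anything from Paley--Zygmund you must bound the second moment of the count $N_n$ of level-$n$ tree vertices $x$ with $o\leftrightarrow(x,0)$ and $y\leftrightarrow(x,0)$ by a constant times $(\mathbb{E}[N_n])^2$, uniformly in $n$. The dominant contributions come from pairs $x,x'$ whose geodesics to $o$ separate at small depth $k$, and controlling them requires a BK-type surgery at the branch point together with a bound on the connectivity along the shared segment --- this is precisely where the hypothesis $\alpha(p)>1/\sqrt{d-1}$ must be used a second time, and without writing the estimate out one cannot verify that the threshold survives; you name this as ``the crux'' and leave it unproved. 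Second, even granting a uniform lower bound $\inf_{x,y}\tau_p(x,y)\ge c>0$, the conclusion that this is ``incompatible with infinitely many clusters'' is not automatic: Burton--Keane is unavailable on the nonamenable graph $T_d\Box\mathbb{Z}$, and the implication ``uniform long-range order implies uniqueness'' is itself a theorem (Lyons--Schramm for unimodular transitive graphs, or the bespoke coalescence argument of Grimmett--Newman on this specific graph) that you neither prove nor cite. As it stands, your proposal is a correct and well-oriented plan whose two decisive steps are missing.
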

	Schonmann considered percolation at critical point, $p=p_u$.
	Then Schonmann showed there are a.s. infinitely many infinite clusters
	by using this theorem.
	In percolation at another critical point, 
	$p=p_c$, we consider the value $\alpha(p_c)$.
	Hutchcroft showed the following theorem.
	
	\begin{thm}[\cite{Hutchcroft1}]
	Let $G$ be a quasi-transitive graph with exponential growth. Then
	\[
	\kappa_{p_c}(n)
	=\inf \left\{ \tau_{p_c}(x,y) \mid x,y \in V, d(x,y) \le n \right\}
	\le \mathrm{gr}(G)^{-n}
	\]
	for all $n \ge 1$,
	where $\tau_p(x,y)=\mathbb{P}_p(x \leftrightarrow y)$ and 
	$\displaystyle\mathrm{gr}(G)=\liminf_{r \to \infty} |B(x,r)|^{1/r}.$
	\end{thm}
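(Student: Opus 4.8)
The plan is to reduce the pointwise estimate to an asymptotic one by a supermultiplicativity argument, then prove the asymptotic estimate below $p_c$ using finiteness of the susceptibility, and finally pass to the critical point by a continuity argument coming from quasi-transitivity.

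First I would record that $\kappa_p$ is supermultiplicative. Since $\{x \leftrightarrow y\} \cap \{y \leftrightarrow z\} \subseteq \{x \leftrightarrow z\}$ and both events are increasing, FKG gives $\tau_p(x,z) \ge \tau_p(x,y)\,\tau_p(y,z)$ for all $x,y,z$. Given $x,z$ with $d(x,z) \le m+n$, choose $y$ on a geodesic from $x$ to $z$ at distance $\min\{m,d(x,z)\}$ from $x$; then $d(x,y) \le m$ and $d(y,z) \le n$, so $\tau_p(x,z) \ge \kappa_p(m)\kappa_p(n)$, whence
\[
\kappa_p(m+n) \ge \kappa_p(m)\,\kappa_p(n).
\]
Applying Fekete's lemma to $-\log \kappa_p$, the limit $\rho(p):=\lim_{n\to\infty}\kappa_p(n)^{1/n}=\sup_{n\ge 1}\kappa_p(n)^{1/n}$ exists, and in particular $\kappa_p(n)\le \rho(p)^n$ for every $n\ge 1$. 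It therefore suffices to prove $\rho(p_c)\le \mathrm{gr}(G)^{-1}$.

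Next I would bound $\rho(p)$ for $p<p_c$ via the susceptibility $\chi_p(o)=\sum_{y}\tau_p(o,y)$. Every $y\in B(o,r)$ has $\tau_p(o,y)\ge \kappa_p(r)$, so $\chi_p(o)\ge |B(o,r)|\,\kappa_p(r)$, i.e.
\[
\kappa_p(r)\le \frac{\chi_p(o)}{|B(o,r)|}.
\]
For $p<p_c$ the sharpness of the phase transition (Aizenman--Barsky, Menshikov, Duminil-Copin--Tassion) yields $\chi_p(o)<\infty$. Choosing a subsequence $(r_k)$ realizing $\liminf_{r}|B(o,r)|^{1/r}=\mathrm{gr}(G)$ and using that $\kappa_p(r)^{1/r}\to \rho(p)$, I get $\rho(p)=\lim_k \kappa_p(r_k)^{1/r_k}\le \lim_k \chi_p(o)^{1/r_k}\,|B(o,r_k)|^{-1/r_k}=\mathrm{gr}(G)^{-1}$, hence $\kappa_p(n)\le \mathrm{gr}(G)^{-n}$ for all $n$ and all $p<p_c$.

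Finally I would let $p\uparrow p_c$. By quasi-transitivity there are only finitely many automorphism orbits of pairs $(x,y)$ with $d(x,y)\le n$, so $\kappa_p(n)$ is the minimum of finitely many functions $p\mapsto \tau_p(x,y)$; each such function is the nondecreasing limit of the polynomials $\mathbb{P}_p^{B(k)}(x\leftrightarrow y)$ and is thus left-continuous, so $\kappa_p(n)$ is left-continuous in $p$. Passing to the limit in $\kappa_p(n)\le \mathrm{gr}(G)^{-n}$ as $p\uparrow p_c$ gives $\kappa_{p_c}(n)\le \mathrm{gr}(G)^{-n}$, as required. The main obstacle is the middle step: the finiteness $\chi_p(o)<\infty$ for $p<p_c$ is precisely the sharpness of the phase transition and is the one genuinely deep input, whereas the supermultiplicativity, the volume comparison, and the final limit are elementary consequences of FKG, Fekete's lemma, and quasi-transitivity.
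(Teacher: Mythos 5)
Your proposal is correct and takes essentially the same route as the paper: the paper does not reprove Hutchcroft's theorem itself, but its proof of the specialization (Lemma~\ref{lem:alpha(p_c)}) uses exactly your three ingredients --- finiteness of the susceptibility below $p_c$ (Theorem~\ref{thm:Aizenman2}, Aizenman--Barsky/Antunovi\'c--Veseli\'c), comparison of the two-point function with the volume growth, and left-continuity in $p$ to pass to $p_c$. Your additional supermultiplicativity/Fekete step and the finitely-many-orbits argument are just the bookkeeping needed for the general quasi-transitive statement, playing the role that the FKG-supermultiplicativity and the supremum representation of $\alpha(p)$ play in the paper's specialized version.
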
	
	The following lemma can be showed by using a similar arugument of this theorem.
	\begin{lem}
	\label{lem:alpha(p_c)}
	Let $G=T_d \Box \mathbb{Z}$. Then we have
	\[
	\alpha(p_c) \le \mathrm{gr}(G)^{-1}=\frac{1}{d-1}.
	\]
	\end{lem}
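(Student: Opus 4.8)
The plan is to bound the exponential decay rate of the tree-layer two-point function by averaging it against the exponentially many vertices of a single tree-sphere, and then to supply a subexponential bound on the truncated susceptibility at $p_c$ in the spirit of Hutchcroft's argument. Recall from the discussion preceding the lemma that $\alpha(p_c)=\lim_{n\to\infty}\mathbb{P}_{p_c}(o\leftrightarrow(n,0))^{1/n}$, so it suffices to control $\tau_{p_c}(o,(v_n,0))=\mathbb{P}_{p_c}(o\leftrightarrow(n,0))$ for large $n$.

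First I would exploit the transitivity of $G=T_d\Box\mathbb{Z}$. By homogeneity of $T_d$, every vertex $(v,0)$ with $d_{T_d}(o,v)=n$ satisfies $\tau_{p_c}(o,(v,0))=\mathbb{P}_{p_c}(o\leftrightarrow(n,0))$, there are exactly $d(d-1)^{n-1}$ such vertices, and each lies on $\partial B(n)$ since $d_G((o,0),(v,0))=n$. Writing $\chi_n(p)=\sum_{x\in B(n)}\tau_p(o,x)=\mathbb{E}_p\!\left[\,|K_o\cap B(n)|\,\right]$ for the truncated susceptibility, where $K_o$ denotes the cluster of $o$, summing the common value over this tree-sphere gives
\[
d(d-1)^{n-1}\,\mathbb{P}_{p_c}(o\leftrightarrow(n,0))\ \le\ \chi_n(p_c).
\]
Taking $n$-th roots and using $\bigl(d(d-1)^{n-1}\bigr)^{1/n}\to d-1$, one obtains
\[
\alpha(p_c)\ \le\ \frac{1}{d-1}\,\limsup_{n\to\infty}\chi_n(p_c)^{1/n}.
\]

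It then remains to establish the core estimate $\limsup_{n\to\infty}\chi_n(p_c)^{1/n}\le 1$, i.e. that the truncated susceptibility grows subexponentially at criticality; combined with the displayed inequality this yields $\alpha(p_c)\le 1/(d-1)=\mathrm{gr}(G)^{-1}$. This is the analogue for $G$ of the estimate underlying Hutchcroft's theorem, and would be proved by the same method: $G$ is transitive with exponential growth $\mathrm{gr}(G)=d-1>1$, and for such a graph an exponential growth rate of $\chi_n(p_c)$ can be converted, through a sprinkling/renormalisation (or differential-inequality) argument, into a connectivity that is incompatible with $p_c$ being the critical parameter. As a consistency check, on the tree $T_d$ itself the resulting inequality is an equality: there $\tau_{p_c}(o,v_n)=(d-1)^{-n}$ while $\chi_n(p_c)$ grows only linearly in $n$, so $\alpha(p_c)=(d-1)^{-1}$ exactly.

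The main obstacle is precisely this subexponential growth of $\chi_n(p_c)$. Note that Hutchcroft's theorem cannot be applied as a black box: it bounds the infimum $\kappa_{p_c}(n)$, and since $\tau_{p_c}(o,(v_n,0))\ge\kappa_{p_c}(n)$ this gives a lower bound, the wrong direction for our purposes, so the underlying estimate must be reproduced for $G$ by a similar argument. The delicate feature is that $\chi_n(p_c)$ need not be bounded — as the tree case shows it genuinely diverges — so one cannot simply invoke finiteness of the susceptibility at $p_c$; the argument must quantify the growth rate and rule out the exponential regime.
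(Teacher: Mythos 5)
Your reduction step is sound and matches the paper's: by transitivity, summing the common value $\tau_{p}(o,(n,0))$ over the tree-sphere $S(n)$ with $|S(n)|=d(d-1)^{n-1}$ bounds $\alpha$ in terms of a susceptibility. But the proposal then hangs everything on the estimate $\limsup_{n}\chi_n(p_c)^{1/n}\le 1$, which you do not prove; you only assert that it ``would be proved by the same method'' as Hutchcroft's theorem via an unspecified sprinkling/renormalisation or differential-inequality argument. That is the entire analytic content of the lemma, and it is genuinely delicate: Hutchcroft's argument controls the \emph{infimum} $\kappa_{p_c}(n)$ of two-point functions (a uniform lower bound on connectivity is exactly what a sprinkling/renormalisation scheme can exploit), whereas $\chi_n(p_c)$ is a \emph{sum}, and exponential growth of an average does not hand you uniformly good connections to renormalise. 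As you yourself observe, $\chi_n(p_c)$ genuinely diverges even on the tree, so no soft finiteness argument is available at criticality; quantifying its growth at $p=p_c$ is the hard part, and the proposal leaves it as a black box that cannot in fact be invoked as one.

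The paper avoids working at criticality altogether, and this is the idea your proposal is missing. For $p<p_c$ the sharpness theorem of Aizenman--Barsky (quasi-transitive case: Antunovi\'c--Veseli\'c) gives finiteness of the \emph{full} susceptibility $\sum_{x}\tau_p(o,x)<\infty$, so the same sphere-summation yields
\[
\tau_p(o,(n,0))\le \frac{\sum_x \tau_p(o,x)}{d(d-1)^{n-1}},
\qquad\text{hence}\qquad
\alpha(p)\le \frac{1}{d-1}\quad\text{for all } p<p_c,
\]
with no growth rate in $n$ to control at all, since the numerator is a constant in $n$. The passage to $p_c$ is then made by the left-continuity of $\alpha$, which was established in the introduction for free: $\alpha(p)$ is a supremum of continuous functions of $p$ (via the finite balls $B(k)$), hence lower semi-continuous, and being non-decreasing it is left-continuous, so $\alpha(p_c)=\lim_{p\uparrow p_c}\alpha(p)\le 1/(d-1)$. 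If you want to salvage your criticality-based route you would have to actually reproduce Hutchcroft-type machinery for $T_d\Box\mathbb{Z}$; the subcritical-plus-left-continuity argument renders that unnecessary.
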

	
	Hutchcroft showed the following therorem.
	\begin{thm}[\cite{Hutchcroft2}]
	Let $G=T_d \Box \mathbb{Z}$.
	Then $p_c<p_u$ holds for all $d\ge3$.
	\end{thm}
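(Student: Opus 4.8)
The plan is to reduce the strict inequality to a single near-critical estimate and to handle everything else by monotonicity of $\alpha$. Write $\theta(p)=\mathbb{P}_p(o\leftrightarrow\infty)$.

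I would first record two elementary consequences of monotonicity. Opening the $n$ tree-geodesic edges joining $(0,0),\dots,(n,0)$ inside the layer at level $0$ gives $\mathbb{P}_p(o\leftrightarrow(n,0))\ge p^{\,n}$, so $\alpha(p)\ge p$ for all $p$; with Lemma~\ref{lem:alpha(p_c)} this yields $p_c\le\alpha(p_c)\le\frac{1}{d-1}$. Secondly, for $p>p_u$ the infinite cluster is a.s.\ unique, so $\{o\leftrightarrow\infty\}\cap\{(n,0)\leftrightarrow\infty\}\subseteq\{o\leftrightarrow(n,0)\}$, and FKG gives $\mathbb{P}_p(o\leftrightarrow(n,0))\ge\theta(p)^2>0$ uniformly in $n$; hence $\alpha(p)=1$ for every $p>p_u$. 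Contrapositively, \emph{any $q$ with $\alpha(q)<1$ satisfies $q\le p_u$}. (One could instead invoke the criterion behind Theorem~\ref{thm:Schonmann}, that $\alpha(q)<\frac1{\sqrt{d-1}}$ already forces non-uniqueness, but the weaker bound $\alpha(q)<1$ suffices here.) These two facts reduce the theorem to one assertion: \emph{there exists $q>\frac{1}{d-1}$ with $\alpha(q)<1$}. Indeed such a $q$ obeys $p_c\le\frac{1}{d-1}<q\le p_u$, whence $p_c<p_u$. The content is that exponential decay of the layer two-point function must survive strictly past $\frac{1}{d-1}$, i.e.\ strictly into the supercritical regime.

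This last point is the crux, and I expect it to be the main obstacle. It is not accessible from the a priori regularity of $\alpha$: the representation $\alpha(p)=\sup_{n,k}\mathbb{P}^{B(k)}_p(o\leftrightarrow(n,0))^{1/n}$ realizes $\alpha$ as a supremum of continuous functions, which forces lower semicontinuity and (with monotonicity) left-continuity, but permits an upward jump on the right — precisely the possibility $\alpha(p_c^+)=1$ that we must exclude. Nor can it be read off from a first moment: the line factor forces the connective constant of $T_d\Box\mathbb{Z}$ to be at least $d-1$, so the finite-susceptibility threshold $1/\mu\le\frac1{d-1}$ never reaches past $\frac1{d-1}$, and in the conjectured window $(p_c,p_u)$ the expected number of connected layer-vertices at tree-distance $n$ already grows (the rate $(d-1)\alpha(q)$ can exceed $1$) even while the individual connection probability $\mathbb{P}_q(o\leftrightarrow(n,0))$ still decays — so the mean is blind to the effect. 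The required input is therefore a genuine stability statement: the exponential rate $\alpha$ is right-continuous at $p_c$ (equivalently, $\alpha(p_c^+)\le\frac1{d-1}<1$). I would attack this through the sharpness-of-phase-transition machinery adapted to the tree-radius observable $\{o\leftrightarrow\partial B^{\mathrm{tree}}_n\}$ — a differential inequality in $p$, or an OSSS/randomized-algorithm estimate — aiming to show that $\{q:\alpha(q)<1\}$ is an interval whose right endpoint strictly exceeds $\frac1{d-1}$. Producing that quantitative gap above $p_c$ is where essentially all the difficulty resides.
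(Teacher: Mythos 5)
Your preliminary reductions are correct, but note first that the paper contains no proof of this statement at all: it is imported as a black box from \cite{Hutchcroft2}, so the comparison is with Hutchcroft's argument rather than with anything in this text. The bound $\alpha(p)\ge p$ (opening a tree-geodesic in the level-$0$ layer), combined with Lemma~\ref{lem:alpha(p_c)}, does give $p_c\le\alpha(p_c)\le\frac{1}{d-1}$, and your uniqueness-plus-FKG observation that $\alpha(p)=1$ for all $p>p_u$ is sound, since $\{o\leftrightarrow\infty\}$ and $\{(n,0)\leftrightarrow\infty\}$ are increasing and a.s.\ uniqueness merges them into $\{o\leftrightarrow(n,0)\}$. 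So the theorem is indeed equivalent to producing some $q>p_c$ with $\alpha(q)<1$ (your stronger requirement $q>\frac{1}{d-1}$ is sufficient but not necessary: $\alpha(q)<1$ already yields $q\le p_u$, so any $q>p_c$ works). The difficulty is that this is where your proposal stops: the crux is stated as a research program, and the program as described would not close the gap.

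Concretely, two things go wrong. First, the sharpness-of-phase-transition machinery you invoke (differential inequalities in the style of Menshikov or Aizenman--Barsky, OSSS/randomized algorithms) is intrinsically a \emph{subcritical} toolkit: it yields exponential decay of connection and one-arm probabilities for $p<p_c$ and gives no control whatsoever for $p>p_c$, which is exactly the regime you need, where $\theta(p)>0$ and the decay of $\tau_p(o,(n,0))$ is a transverse phenomenon coexisting with infinite clusters; nothing in your sketch explains why the rate $\alpha$ should stay below $1$ across $p_c$. Second, you cannot shortcut via the right-continuity of $\alpha$ at $p_c$ asserted in Theorem~\ref{thm:conti}: in this paper that continuity is proved through $\eta(p)$, whose construction needs $\beta(p_c)<1$, and Lemma~\ref{lem:beta<1} delivers $\beta(p)<1$ only for $p<p_u$ --- i.e.\ the continuity statement already presupposes $p_c<p_u$, so appealing to it here is circular. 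The missing idea, which is the entire content of \cite{Hutchcroft2}, is to exploit nonunimodularity: one studies tilted susceptibilities $\sum_{x}\Delta(x)^{\lambda}\tau_p(o,x)$, where for $T_d\Box\mathbb{Z}$ the modular function is $(d-1)^{L(x)}$ with $L$ the level function of Section~\ref{sc:pf.beta<1}; Hutchcroft proves differential inequalities for these tilted quantities and shows that (at the symmetric tilt $\lambda=\frac{1}{2}$) they remain finite strictly beyond $p_c$. Since $a_n(z)\ge (bz)^n$, finiteness of such a tilted sum at $z=\frac{1}{\sqrt{d-1}}$ forces $\alpha(p)\le\frac{1}{\sqrt{d-1}}<1$, which is precisely your missing estimate; note that the quantity $J_m(p,z)$ of Section~\ref{sc:pf.beta<1} is exactly a tilted sum of this kind, but the present paper can bound it only for $p<p_u$ by invoking Theorem~\ref{thm:Schonmann}. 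So your skeleton is compatible with, and implicit in, the actual proof, but the theorem's quantitative content --- the mechanism pushing the decay rate past $p_c$ --- is absent from the proposal.
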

	Then we can consider the value $\alpha(p)$ for $p \in (p_c, p_u)$.
	\begin{lem}
	\label{lem:alpha(p>p_c)}
	For all $p \in (p_c, p_u)$, we have 
	\[
	\alpha(p) \ge \frac{1}{d-1}.
	\]
	\end{lem}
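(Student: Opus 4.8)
The plan is to derive the lower bound from the mere existence of an infinite cluster, by playing the $(d-1)^n$ growth of the spheres of $T_d$ against the exponential decay of the two-point function. Fix $p\in(p_c,p_u)$, so that $\theta(p):=\mathbb{P}_p(o\leftrightarrow\infty)>0$ and $p<1$. The first step is to control the geometry of infinite clusters through the tree projection $\pi\colon T_d\Box\mathbb{Z}\to T_d$. If a cluster $C$ had finite projection $F=\pi(C)$, then $C$ would be an infinite cluster of the graph induced on $F\times\mathbb{Z}$, which is a strip of finite width $|F|$ and therefore has critical probability $1$. Taking a union over the countably many finite $F\subset T_d$, for $p<1$ there is almost surely no infinite cluster with finite projection; hence every infinite cluster projects onto an infinite subtree of $T_d$.

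Next I set $g_n=\mathbb{P}_p\big(o\leftrightarrow\{v_n\}\times\mathbb{Z}\big)$, the probability that the origin reaches the column over a tree-vertex at distance $n$. Since the set of tree-vertices whose column meets $C(o)$ is exactly $\pi(C(o))$, we have
\[
\sum_{x\in T_d}\mathbb{P}_p\big(o\leftrightarrow\{x\}\times\mathbb{Z}\big)=\mathbb{E}_p\big[\,|\pi(C(o))|\,\big],
\]
and by the first step this expectation is infinite, because with probability $\theta(p)>0$ the projection is infinite. Grouping tree-vertices by distance, using $|\{x:d(o_T,x)=n\}|=d(d-1)^{n-1}$, gives $\sum_{n\ge1}(d-1)^{n}g_n=\infty$. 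An FKG/concatenation argument (connect $o$ to the column of $v_n$, then that column to the column of $v_{n+l}$) shows $g_{n+l}\ge g_n g_l$, so $\beta(p):=\lim_n g_n^{1/n}=\sup_n g_n^{1/n}$ exists and $g_n\le\beta(p)^n$. If $\beta(p)<1/(d-1)$ the series would be geometric with ratio $(d-1)\beta(p)<1$, hence convergent; therefore $\beta(p)\ge 1/(d-1)$.

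It remains to pass from the column rate $\beta(p)$ back to the same-level rate $\alpha(p)=\lim_n a_n^{1/n}$, where $a_n=\mathbb{P}_p(o\leftrightarrow(v_n,0))$. Since $a_n\le g_n$ we get $\alpha(p)\le\beta(p)$ for free, and the whole content is the reverse inequality, namely that the $\mathbb{Z}$-direction contributes only sub-exponentially to the decay rate in the tree distance. Here I would bound $g_n\le\sum_{k\in\mathbb{Z}}\mathbb{P}_p(o\leftrightarrow(v_n,k))$ and analyse this sum by a transfer-operator along the geodesic $o_T=u_0,u_1,\dots,u_n=v_n$: because $T_d$ is a tree, any open path reaching the column of $v_n$ must cross each tree-edge $\{u_{i-1},u_i\}$ through a horizontal edge at some level, so connectivity factorises through an $n$-step chain of column-to-column transitions governed by the translation-invariant column kernel $\tau(j,j')=\mathbb{P}_p((x,j)\leftrightarrow(x,j'))$. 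Positivity of this kernel forces its diagonal entry (level $0\to 0$) and its full row-sum to share one exponential growth rate, which should yield $\limsup_n\big(\sum_k\mathbb{P}_p(o\leftrightarrow(v_n,k))\big)^{1/n}\le\alpha(p)$ and hence $\beta(p)\le\alpha(p)$.

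I expect this last step to be the main obstacle. The projection argument and the expected-size dichotomy are soft, but identifying $\alpha(p)=\beta(p)$ requires genuinely controlling excursions in the $\mathbb{Z}$-direction together with the dependence between the successive column-crossing events along the geodesic, so the transfer-operator comparison must be set up as an honest inequality rather than a formal factorisation. Once $\alpha(p)=\beta(p)$ is in hand, combining it with the second step gives $\alpha(p)=\beta(p)\ge 1/(d-1)$, as required.
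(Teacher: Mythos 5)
Your reduction is sound as far as it goes, and in fact your first two steps are essentially the paper's: the paper bounds $\mathbb{P}_p(o\leftrightarrow\infty)\le\sum_{x\in\partial B(n)}\mathbb{P}_p(o\leftrightarrow\pi^{-1}(x))\le d(d-1)^{n-1}\alpha(p)^n$ (using that $p_c(B(n)\Box\mathbb{Z})=1$, your strip observation), which is the same union bound over boundary columns that underlies your expected-projection-size computation. But the entire weight of the lemma then rests on the step you yourself flag as unresolved: showing that the column connection rate $\limsup_n g_n^{1/n}$ is at most $\alpha(p)$. Your transfer-operator sketch is not an argument, and as outlined it would fail: the "column kernel" $\tau(j,j')$ has row sum $\sum_{k}\mathbb{P}_p(o\leftrightarrow(v_n,k))$, which you cannot even bound without first knowing that the \emph{vertical} two-point function decays exponentially, i.e.\ $\beta(p)<1$ in the paper's notation. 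In this paper that statement is Lemma \ref{lem:beta<1}, the deepest ingredient, occupying all of Section \ref{sc:pf.beta<1} and requiring Schonmann's bound $\alpha(p_u)\le 1/\sqrt{d-1}$ plus Liggett's weighted generating-function method with the level function $L(x)$. Also, the claimed factorisation through independent successive column crossings is false as stated (paths recross tree edges and the crossing events are dependent; a BK-type bound per fixed sequence of crossing levels still forces you to sum over levels, returning you to the uncontrolled row sums), and the general principle that "positivity forces the diagonal entry and the row sum to share one growth rate" is not true for infinite kernels without off-diagonal decay.

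The paper closes exactly this gap by an elementary device your proposal misses: truncate to slabs rather than working in the full graph. In $G_{\bullet,m}=T_d\Box[-m,m]$ the union bound over levels costs only a factor $(2m+1)$, giving $\mathbb{P}_p^{G_{\bullet,m}}(o\leftrightarrow\pi^{-1}(n))\le(2m+1)\alpha_m(p)^n$ and hence $\alpha'_m(p)=\alpha_m(p)$ for every $m$; one then lets $m\to\infty$ using Schonmann's approximation lemma $\alpha_m(p)\to\alpha(p)$ (Lemma \ref{lem:alpha_m}, with the analogous statement for $\alpha'_m$, whose slab supermultiplicativity uses the $p^{2m}$ trick of opening a finite column segment --- note that your unqualified claim $g_{n+l}\ge g_ng_l$ in the full graph has the same difficulty, since one cannot open an infinite column). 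This yields $\alpha'(p)=\alpha(p)$ (Lemma \ref{lem:alpha^prime}) with no control of the $\mathbb{Z}$-direction needed at all, after which your divergence argument (or the paper's direct bound) finishes the proof. So the gap is genuine but repairable: replace your transfer-operator step by the slab identity $\alpha'_m=\alpha_m$ and the limit $m\to\infty$.
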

	We consider other characteristics of $\alpha(p)$.
	
	\begin{thm}
	\label{thm:conti}
	The function $\alpha(p)$ is a strictly increasing on $[0,p_u]$,
	and a continuous on $[0,p_c]$.
	\end{thm}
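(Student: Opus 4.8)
The plan is to derive both assertions from a single differential identity for the finite-$n$ two-point functions and then pass to the limit $n\to\infty$. Write $A_n=\{o\leftrightarrow(n,0)\}$ and $\tau_p(n)=\mathbb{P}_p(A_n)$, so that $\alpha(p)=\lim_{n\to\infty}\tau_p(n)^{1/n}=\sup_{n\ge1}\tau_p(n)^{1/n}$. Since $A_n$ is increasing and, after the finite-volume approximation in $B(k)$ with $k\to\infty$ already used in the Introduction, depends on finitely many edges, Russo's formula gives
\[
\frac{d}{dp}\log\tau_p(n)=\frac1p\,\mathbb{E}_p\!\left[P_n\mid A_n\right],
\]
where $P_n$ is the number of \emph{open} edges that are pivotal for $A_n$. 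Integrating in $p$ and dividing by $n$,
\[
\frac1n\log\tau_{p_2}(n)-\frac1n\log\tau_{p_1}(n)
=\int_{p_1}^{p_2}\frac1{pn}\,\mathbb{E}_p\!\left[P_n\mid A_n\right]\,dp,
\]
whose left-hand side converges to $\log\alpha(p_2)-\log\alpha(p_1)$. Thus everything reduces to two-sided control of the pivotal density $\tfrac1n\mathbb{E}_p[P_n\mid A_n]$, uniform in $p$ over the relevant range.

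For strict monotonicity on $[0,p_u]$ I would prove a lower bound $\tfrac1n\mathbb{E}_p[P_n\mid A_n]\ge c>0$, locally uniform in $p$. The geometric input is the cut structure of the tree: if $e_1,\dots,e_n$ are the edges of the tree-geodesic from the root to $v_n$, then for each $i$ the family $\{(e_i,j):j\in\mathbb{Z}\}$ is a cut of $T_d\Box\mathbb{Z}$ separating $o$ from $(n,0)$, so on $A_n$ every such cut is crossed by the open cluster. If a cut is crossed exactly once, that crossing edge is pivotal. A cut fails to be singly crossed only if it carries two disjoint open crossings, an event whose density I would bound away from $1$ using exponential decay of same-layer connections: by Theorem \ref{thm:Schonmann} and monotonicity, $\alpha(p)\le\alpha(p_u)\le(d-1)^{-1/2}<1$ for all $p\le p_u$, so a BK-type estimate makes double crossings exponentially costly. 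Hence a positive fraction of the $n$ cuts contributes a pivotal edge, giving $\mathbb{E}_p[P_n\mid A_n]\ge cn$, and the displayed identity then yields $\log\alpha(p_2)-\log\alpha(p_1)\ge c\,(p_2-p_1)>0$ for $0<p_1<p_2\le p_u$. Strictness near $0$ is immediate from $\alpha(p)\le\mu p\to0$ (with $\mu$ a connective constant of the graph) together with $\tau_p(n)>0$ for $p>0$.

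For continuity on $[0,p_c]$ I would use that $\alpha$ is already left-continuous, so it suffices to prove right-continuity at each $p<p_c$; continuity at the endpoint $p_c$ then follows from left-continuity. Right-continuity I would obtain from a matching upper bound $\tfrac1n\mathbb{E}_p[P_n\mid A_n]\le C<\infty$, locally uniform on $[0,p_c)$, which by the displayed identity makes $\log\alpha$ locally Lipschitz, hence continuous. The pivotal edges all lie on a common open path, so $P_n$ is at most the length of a shortest open connection from $o$ to $(n,0)$; in the subcritical phase $p<p_c$ there is no infinite cluster, and the exponential decay of $\tau_p(n)$ forces a realized connection to be a nearly geodesic tube of length $O(n)$ with exponentially small probability of large excess, whence $\mathbb{E}_p[P_n\mid A_n]\le Cn$. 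Right-continuity at $p=0$ is again immediate from $\alpha(p)\le\mu p$.

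The main obstacle is exactly these two uniform pivotal estimates. The lower bound requires showing that the density of multiply-crossed cuts stays bounded away from $1$ uniformly in $p\le p_u$; although morally a consequence of $\alpha<1$, making this quantitative (controlling two disjoint crossings of a single cut conditionally on the global event $A_n$) is delicate. The upper bound is what confines continuity to $[0,p_c]$: it rests on the length of a minimal connection being $O(n)$, which I can control in the subcritical phase through the absence of an infinite cluster and exponential decay, but which may fail once $p>p_c$, since a connection could then detour through the infinite cluster and accumulate many bottlenecks. Establishing the length bound rigorously, via a mild large-deviation estimate for the connecting path, is the principal technical step for the continuity half.
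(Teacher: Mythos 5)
Your Russo-formula framework is coherent in outline, but the two ``uniform pivotal estimates'' you defer are not finishing touches --- they are the entire difficulty, and at least one of them fails where it matters. For the lower bound: conditioning on $A_n$ destroys the product structure, so no BK-type inequality applies under $\mathbb{P}_p(\,\cdot\mid A_n)$; what you actually need is $\mathbb{P}_p(A_n\cap\{\text{cut }i\text{ multiply crossed}\})\le\theta\,\mathbb{P}_p(A_n)$ with $\theta<1$ uniformly in $i$, $n$ and $p\le p_u$, and your sketch bounds only the unconditioned double-crossing probability, which says nothing about its correlation with the exponentially rare event $A_n$. (In addition, each cut $\{(e_i,j):j\in\mathbb{Z}\}$ is infinite and $A_n$ itself depends on infinitely many edges, so even stating Russo's formula requires the kind of finite-edge approximation work the paper does in Section~\ref{sc:extension}; exchanging the $k\to\infty$ limit with the integrated derivative identity needs justification.) The paper obtains strict monotonicity with none of this machinery: it extends Grimmett's inequality $\mathbb{P}_{p^\gamma}(A)\le\mathbb{P}_p(A)^\gamma$ to connection events (Lemma~\ref{lem:stri+}) and combines it with Schonmann's bound $\alpha(p_u)\le(d-1)^{-1/2}<1$ to get $\alpha(p^\gamma)\le\alpha(p)^\gamma<\alpha(p)$ --- a soft argument that bypasses pivotal estimates entirely, and which you could have invoked in place of your hardest step.

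The continuity half has a gap that is fatal relative to how the theorem is used downstream. Your upper bound rests on exponential tails for cluster size or chemical distance, which hold only for $p<p_c$ and whose constants degenerate as $p\uparrow p_c$; so at best you get local Lipschitz continuity on $[0,p_c)$ plus left-continuity at $p_c$. But the paper needs, and proves, \emph{two-sided} continuity at $p_c$: the Corollary $\alpha(p_c)=1/(d-1)$, and hence Theorem~\ref{thm:alpha.inverse}, are obtained by letting $p\downarrow p_c$ in Lemma~\ref{lem:alpha(p>p_c)}, which requires right-continuity of $\alpha$ \emph{at} $p_c$. The paper secures exactly this by a mechanism that survives at criticality: connection probabilities are right-continuous on $[0,p_c]$ because clusters are a.s.\ finite at $p_c$ on this nonamenable Cayley graph (Theorem~\ref{thm:Cheeger}); the vertical rate satisfies $\beta(p)<1$ for all $p<p_u$ (the Liggett-style generating-function argument of Section~\ref{sc:pf.beta<1}, driven by $\alpha(p)<(d-1)^{-1/2}$, not by subcriticality), which makes $I_n(p)=\sum_k\tau_p(o,(n,k))$ finite and continuous on $[0,p_c]$; and then $\alpha=\eta=\inf_n I_n(p)^{1/n}$ on $[0,p_u)$ is an infimum of continuous functions, hence right-continuous there, including at $p_c$. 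Your subcritical large-deviation route has no analogue of this at $p=p_c$, so even granting both of your unproven estimates, the proposal establishes a strictly weaker statement than the one the paper's main theorem relies on.
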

	Especially, $\alpha(p)$ is continuous at $p_c$.
	Hence by using Lemma \ref{lem:alpha(p_c)} and Lemma \ref{lem:alpha(p>p_c)},
	we have the following corollary.
	
	\begin{cor}
	Let $p_c$ be the critical probability, Then we have
	\[
	\alpha(p_c)=\frac{1}{d-1}.
	\]
	\end{cor}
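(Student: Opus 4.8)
The plan is to derive the equality by squeezing $\alpha(p_c)$ between the upper bound of Lemma \ref{lem:alpha(p_c)} and a matching lower bound, the latter obtained by letting $p$ decrease to $p_c$ in Lemma \ref{lem:alpha(p>p_c)} and invoking the continuity of $\alpha$ at $p_c$ from Theorem \ref{thm:conti}.

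For the upper bound I would simply quote Lemma \ref{lem:alpha(p_c)}, which gives $\alpha(p_c) \le 1/(d-1)$ with no further work. It then remains to prove the reverse inequality $\alpha(p_c) \ge 1/(d-1)$.

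For the lower bound, first I would note that by \cite{Hutchcroft2} we have $p_c < p_u$ for all $d \ge 3$, so the interval $(p_c, p_u)$ is nonempty and I may choose a strictly decreasing sequence $p_n \in (p_c, p_u)$ with $p_n \to p_c$. By Lemma \ref{lem:alpha(p>p_c)} each term satisfies $\alpha(p_n) \ge 1/(d-1)$. Then, using that $\alpha$ is continuous at $p_c$ (Theorem \ref{thm:conti}), I would pass to the limit to get
\[
\alpha(p_c) = \lim_{n \to \infty} \alpha(p_n) \ge \frac{1}{d-1},
\]
and combine with the upper bound to conclude $\alpha(p_c) = 1/(d-1)$.

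The point that actually needs care, and the only place where Theorem \ref{thm:conti} does essential work, is that the limit above is taken from the right: the bound $\alpha \ge 1/(d-1)$ is available only on $(p_c, p_u)$, strictly above $p_c$. Since $\alpha$ is already known to be left-continuous everywhere, the genuinely new input is precisely the right-continuity of $\alpha$ at $p_c$. Without it one could only conclude $\lim_{p \downarrow p_c} \alpha(p) \ge 1/(d-1)$, which a priori need not coincide with $\alpha(p_c)$; the full force of the corollary therefore rests on identifying $\alpha(p_c)$ with this right-hand limit, after which no computation remains.
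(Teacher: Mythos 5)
Your proof is correct and follows essentially the same route as the paper: the upper bound is Lemma \ref{lem:alpha(p_c)}, and the lower bound comes from Lemma \ref{lem:alpha(p>p_c)} on the nonempty interval $(p_c,p_u)$ together with taking $p \downarrow p_c$ via the continuity of $\alpha$ at $p_c$ supplied by Theorem \ref{thm:conti}. You also correctly isolate the key point, namely that left-continuity alone would not suffice and the right-continuity at $p_c$ is the essential new input --- exactly the role Theorem \ref{thm:conti} plays in the paper.
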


	The function $\alpha(p)$ is strictly increasing on $[0,p_u]$.
	Then we can define the inverse function of $\alpha$
	from $[0,\alpha(p_c)]$ to $[0,p_c]$. 
	By using this inverse function, we have the following theorem.
	\begin{thm}
	\label{thm:alpha.inverse}
	Let $G=T_d \Box \mathbb{Z}$. Then we have
	\[
	p_c=\alpha^{-1}\left(\frac{1}{d-1}\right).
	\]
	\end{thm}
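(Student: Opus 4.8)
The plan is to deduce the identity as an essentially formal consequence of the Corollary, $\alpha(p_c)=\frac{1}{d-1}$, together with the monotonicity and continuity of $\alpha$ recorded in Theorem~\ref{thm:conti}. The only point that must genuinely be checked is that the expression $\alpha^{-1}\!\left(\frac{1}{d-1}\right)$ is meaningful, that is, that $\frac{1}{d-1}$ actually lies in the domain on which $\alpha^{-1}$ was defined; once this is in place the equality falls out of the definition of an inverse map.

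First I would pin down the domain and range of $\alpha^{-1}$. Observe that $\alpha(0)=0$: at $p=0$ every edge is closed, so $\mathbb{P}_0(o\leftrightarrow(n,0))=0$ for each $n\ge1$, whence $\alpha(0)=\lim_{n\to\infty}0^{1/n}=0$. By Theorem~\ref{thm:conti}, $\alpha$ is continuous and strictly increasing on $[0,p_c]$; combined with the intermediate value theorem this shows that the restriction $\alpha\colon[0,p_c]\to[\alpha(0),\alpha(p_c)]=[0,\alpha(p_c)]$ is a strictly increasing continuous bijection, hence a homeomorphism. Its inverse is precisely the map $\alpha^{-1}\colon[0,\alpha(p_c)]\to[0,p_c]$ referred to in the statement.

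Next I would invoke the Corollary to identify the right endpoint of this interval: since $\alpha(p_c)=\frac{1}{d-1}$, the domain of $\alpha^{-1}$ is exactly $\bigl[0,\frac{1}{d-1}\bigr]$ and $\frac{1}{d-1}$ is its right endpoint. In particular $\frac{1}{d-1}=\alpha(p_c)$ lies in this domain, and applying the inverse gives $\alpha^{-1}\!\left(\frac{1}{d-1}\right)=\alpha^{-1}\bigl(\alpha(p_c)\bigr)=p_c$, which is the claim.

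I do not expect any genuine obstacle in this final step, since all the substantive work is upstream. The real content is carried by Theorem~\ref{thm:conti} (strict monotonicity, which supplies the injectivity needed to invert $\alpha$, and continuity on $[0,p_c]$, which supplies surjectivity onto a full interval) and by the Corollary (whose proof fuses Lemma~\ref{lem:alpha(p_c)} and Lemma~\ref{lem:alpha(p>p_c)} with the left-continuity and strict monotonicity of $\alpha$ to squeeze $\alpha(p_c)$ to the value $\frac{1}{d-1}$). The one place demanding a little care is continuity of $\alpha$ exactly at $p_c$, as it is this that prevents the range from having a gap and makes $\frac{1}{d-1}$ a genuine value of $\alpha$ rather than merely a supremum; but this too is already furnished by Theorem~\ref{thm:conti}.
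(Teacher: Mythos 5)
Your proposal is correct and takes essentially the same route as the paper: there the theorem is likewise treated as an immediate formal consequence of the Corollary $\alpha(p_c)=\frac{1}{d-1}$ together with the strict monotonicity and continuity on $[0,p_c]$ supplied by Theorem~\ref{thm:conti}, which make $\alpha^{-1}$ well defined from $[0,\alpha(p_c)]$ to $[0,p_c]$ and give $\alpha^{-1}\bigl(\frac{1}{d-1}\bigr)=\alpha^{-1}\bigl(\alpha(p_c)\bigr)=p_c$. Your extra verifications (that $\alpha(0)=0$ and that the restriction is a homeomorphism onto the full interval) simply make explicit what the paper leaves implicit.
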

	We must show Lemma \ref{lem:alpha(p_c)}, Lemma \ref{lem:alpha(p>p_c)}
	and Theorem \ref{thm:conti} to gain Theorem \ref{thm:alpha.inverse}.
	We will show  Lemma \ref{lem:alpha(p_c)} and Lemma \ref{lem:alpha(p>p_c)} 
	in Section \ref{sc:pf.lem}.
	In Section \ref{sc:extension} and Section \ref{sc:conn.event},
	we prepare some tools to show Theorem \ref{thm:conti}.
	and it will show in Section \ref{sc:beta} and Section{sc:pf.beta<1}.
	
	\section{Proof of Lemma \ref{lem:alpha(p_c)} and Lemma \ref{lem:alpha(p>p_c)}}
	\label{sc:pf.lem}
	We will require the following well-known theorem.
	\begin{thm}[\cite{Aizenman2}, \cite{Antunovic}]
	\label{thm:Aizenman2}
	Let $G$ be a quasi-transitive graph, and $o$ be a fixed vertex of $G$.
	Then we have
	\[
	\sum_{x \in V} \tau_p(o,x) <\infty
	\]
	for all $p<p_c$.
	\end{thm}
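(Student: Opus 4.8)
The sum $\sum_{x\in V}\tau_p(o,x)=\sum_{x\in V}\mathbb{P}_p(x\in C_o)$ is exactly the expected cluster size $\mathbb{E}_p|C_o|$, the susceptibility $\chi(p)$, so the assertion is the finiteness of $\chi(p)$ throughout the subcritical regime $p<p_c$ — that is, the sharpness of the phase transition. Writing $p_T=\sup\{p:\chi(p)<\infty\}$, one direction is immediate: if $\chi(p)<\infty$ then $\mathbb{P}_p(|C_o|=\infty)=0$, so $p_T\le p_c$. The entire content is the reverse inequality $p_c\le p_T$, equivalently: whenever $\chi(p)=\infty$ there is already an infinite cluster. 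The plan is to follow the Aizenman--Barsky magnetization method, in the quasi-transitive form used in \cite{Aizenman2} and \cite{Antunovic}. This is the route that controls the cluster \emph{volume} rather than merely its radius, and the distinction is essential here: since $G=T_d\Box\mathbb{Z}$ has exponential growth, exponential decay of $\mathbb{P}_p(o\leftrightarrow\partial B(n))$ alone would not bound $\chi(p)$, because the number of vertices in a shell grows exponentially as well.

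First I would introduce a ghost field: for $h\ge 0$, adjoin a vertex $g$ joined to every $x\in V$ by an edge that is open with probability $1-e^{-h}$, independently of everything else, and set
\[
M(p,h)=\mathbb{P}_{p,h}(o\leftrightarrow g)=1-\mathbb{E}_p\!\left[e^{-h|C_o|}\right].
\]
Then $M(p,0^+)=\theta(p):=\mathbb{P}_p(|C_o|=\infty)$ is the percolation probability, while $\partial_h M(p,h)\big|_{h=0^+}=\mathbb{E}_p|C_o|=\chi(p)$ by monotone convergence; thus $p_c$ and $p_T$ are encoded, respectively, as the thresholds where $M(\cdot,0^+)$ becomes positive and where $\partial_h M(\cdot,0^+)$ blows up.

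The core step is to derive, via Russo's formula together with the BK and FKG inequalities, the Aizenman--Barsky system of differential inequalities for $M$, the principal one being of the form
\[
M\le h\,\partial_h M+M^2+p\,M\,\partial_p M,
\]
supplemented by auxiliary inequalities controlling $\partial_p M$ in terms of $\partial_h M$ with constants depending only on the maximal degree of $G$. Quasi-transitivity and local finiteness enter precisely here, furnishing the uniform degree bound that replaces the lattice symmetries of the original argument; this is the adaptation carried out in \cite{Antunovic}. An analysis of this ODE system (the Aizenman--Barsky ``gap'' argument) then shows that $M(p,0^+)>0$ for every $p>p_T$, whence $p_c\le p_T$; combined with the trivial bound this yields $p_T=p_c$. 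The same inequalities upgrade finiteness of $\chi$ into exponential decay of the volume tail, $\mathbb{P}_p(|C_o|\ge n)\le e^{-c(p)\,n}$ for $p<p_c$, which in particular re-proves $\chi(p)=\sum_x\tau_p(o,x)<\infty$.

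I expect the main obstacle to be the rigorous derivation of the differential inequalities in the quasi-transitive setting: the Russo/BK computation producing the term $p\,M\,\partial_p M$ requires carefully summing over pivotal edges and exploiting that each vertex has boundedly many incident edges, and one must verify that the ghost-field construction and the resulting monotone couplings behave well under mere quasi-transitivity rather than full transitivity. The subsequent ODE analysis is delicate but essentially analytic once the inequalities are in hand. An alternative is Menshikov's pivotal-counting approach, which yields radius decay more directly; but because $G$ grows exponentially one would still need the additional volume estimate, so the magnetization route is the more economical choice for the stated conclusion.
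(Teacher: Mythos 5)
The paper offers no proof of this theorem at all: it is quoted as an external result, attributed in the transitive case to Aizenman--Barsky \cite{Aizenman2} and in the quasi-transitive case to Antunovi\'c--Veseli\'c \cite{Antunovic}, and your sketch (ghost field, magnetization $M(p,h)=1-\mathbb{E}_p[e^{-h|C_o|}]$, the differential inequality $M\le h\,\partial_h M+M^2+pM\,\partial_p M$ with the auxiliary bound on $\partial_p M$, and the gap argument yielding $p_T=p_c$ plus exponential decay of the volume tail) is precisely the route taken in those references. Your proposal is therefore correct in outline and follows essentially the same approach as the paper's (cited) proof, with the one caveat you already flag yourself: quasi-transitivity is needed for more than a degree bound, namely to run the Russo/BK computation uniformly over the finitely many vertex orbits, which is exactly the adaptation carried out in \cite{Antunovic}.
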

	This theorem was proven in the transitive case 
	by Aizenman and Barsky \cite{Aizenman2}, 
	and in the quasi-transitive case by Antunovi\'c and Veseli\'c \cite{Antunovic}.

	\noindent \textit{Proof of Lemma \ref{lem:alpha(p_c)}}. 
	Let $S(n)$ be a set of vertices of $T_d$ with $n$ distance from the origin.
	For all $p \in [0,1]$ and all $n \ge 1$, we have
	\[
	\tau_p(o, (n,0)) \cdot |S(n)|
	= \sum_{x \in T_d, |x|=n} \tau_p(o,(x,0))
	\le \sum_{x \in T_d \Box \mathbb{Z}} \tau_p(o,x).
	\]
	By using Theorem \ref{thm:Aizenman2},
	the right-hand side is finite when $p<p_c$.
	We know $|S(n)|=d(d-1)^{n-1}$.
	Then we have
	\[
	\lim_{n \to \infty} \tau_p(o, (n,0))^{\frac{1}{n}}
	\le \lim_{n \to \infty} \left( \frac{\sum \tau_p(o,x)}
	{|S(n)|} \right)^{\frac{1}{n}}
	=\frac{1}{d-1}.
	\]
	This means that $\alpha(p) \le 1/(d-1)$ for all $p<p_c$.
	Since $\alpha(p)$ is a left-continuous, we have $\alpha(p_c) \le 1/(d-1)$.
	This ends the proof of Lemma \ref{lem:alpha(p_c)}.
	\begin{flushright} $\Box$ \end{flushright}

	We will prepare some tools to show Lemma \ref{lem:alpha(p>p_c)}.
	Let $G_{\bullet,m}=T_d \Box [-m, m]$ for each $m \ge 0$.
	Similarly to $\alpha(p)$, we define $\alpha_m(p)$ by
	\[
	\alpha_m(p)=\lim_{n \to \infty} \mathbb{P}_p^{G_{\bullet,m}}
	(o \leftrightarrow (n,0))^{\frac{1}{n}}
	=\sup_{n\ge 1} \mathbb{P}_p^{G_{\bullet,m}}
	(o \leftrightarrow (n,0))^{\frac{1}{n}}.
	\]
	\begin{lem}[\cite{Schonmann}]
	\label{lem:alpha_m}
	The function $\alpha(p)$ is given by taking a limit of $\alpha_m(p)$, that is,
	\[
	\lim_{m \to \infty} \alpha_m(p)= \alpha(p)
	\]
	for all $p \in [0,1]$.
	\end{lem}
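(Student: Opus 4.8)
The plan is to establish the two inequalities $\lim_{m\to\infty}\alpha_m(p)\le\alpha(p)$ and $\lim_{m\to\infty}\alpha_m(p)\ge\alpha(p)$ after first checking that the limit in $m$ exists. Since $G_{\bullet,m}$ is a subgraph of $G_{\bullet,m+1}$, which in turn is a subgraph of $G$, every open path joining two vertices inside $G_{\bullet,m}$ is also open in $G_{\bullet,m+1}$ and in $G$. Hence for each fixed $n\ge1$ the connection probabilities are monotone,
\[
\mathbb{P}_p^{G_{\bullet,m}}(o\leftrightarrow(n,0)) \le \mathbb{P}_p^{G_{\bullet,m+1}}(o\leftrightarrow(n,0)) \le \mathbb{P}_p^{G}(o\leftrightarrow(n,0)),
\]
and taking $n$-th roots followed by the supremum over $n$ yields $\alpha_m(p)\le\alpha_{m+1}(p)\le\alpha(p)$. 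Thus $(\alpha_m(p))_m$ is non-decreasing and bounded above by $\alpha(p)$, so the limit exists and is at most $\alpha(p)$.

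For the reverse inequality I would first fix $n$ and let $m\to\infty$. The key observation is that an open path in $G$ from $o$ to $(n,0)$ uses only finitely many edges, hence is contained in $G_{\bullet,m}$ once $m$ is large enough. Working inside the standard percolation configuration on $G$, the events $A_m=\{o\leftrightarrow(n,0)\text{ using only edges of }G_{\bullet,m}\}$ increase with $m$ and satisfy $\bigcup_m A_m=\{o\leftrightarrow(n,0)\}$; since the restriction of the configuration to the edges of $G_{\bullet,m}$ has exactly the law $\mathbb{P}_p^{G_{\bullet,m}}$, continuity from below of the probability measure gives, for each fixed $n$,
\[
\lim_{m\to\infty}\mathbb{P}_p^{G_{\bullet,m}}(o\leftrightarrow(n,0)) = \mathbb{P}_p^{G}(o\leftrightarrow(n,0)).
\]

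It then remains to interchange the supremum over $n$ with the limit over $m$. Writing $f_m(n)=\mathbb{P}_p^{G_{\bullet,m}}(o\leftrightarrow(n,0))^{1/n}$, this quantity is non-decreasing in $m$ for each fixed $n$, so the monotone limit in $m$ is a supremum and the two suprema commute:
\begin{align*}
\lim_{m\to\infty}\alpha_m(p)
&= \sup_{m}\sup_{n\ge1} f_m(n)
= \sup_{n\ge1}\sup_{m} f_m(n) \\
&= \sup_{n\ge1}\lim_{m\to\infty} f_m(n)
= \sup_{n\ge1}\mathbb{P}_p^{G}(o\leftrightarrow(n,0))^{1/n}
= \alpha(p),
\end{align*}
where the last equality is the supremum representation of $\alpha(p)$ recorded in the introduction. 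Combined with the first paragraph this gives the claimed equality. The only genuinely delicate point is the fixed-$n$ convergence of the slab connection probabilities to the full-graph one; it relies on local finiteness of $G$, which forces any realized connecting path to be finite and therefore eventually trapped in a slab, rather than on any quantitative percolation estimate. Everything else is monotonicity together with the elementary identity that iterated suprema may be exchanged.
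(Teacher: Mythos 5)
Your proof is correct and takes essentially the same approach as the paper: both arguments combine the monotonicity $\alpha_m(p)\le\alpha_{m+1}(p)\le\alpha(p)$ with the fixed-$n$ monotone convergence $\mathbb{P}_p^{G_{\bullet,m}}(o\leftrightarrow(n,0))\to\mathbb{P}_p(o\leftrightarrow(n,0))$, which the paper invokes without comment and you justify by exhausting the connection event by finite paths. Your interchange of iterated suprema is simply a cleaner packaging of the paper's $\epsilon$-argument via the bound $\mathbb{P}_p^{G_{\bullet,m}}(o\leftrightarrow(n,0))\le\alpha_m(p)^n$.
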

	
	\begin{proof}
	It is clear that for all $m \ge 0, \alpha_m(p) \le \alpha_{m+1}(p)\le \alpha(p)$.
	Then $\{\alpha_m(p)\}_{m\ge0}$ converges and 
	we have $\displaystyle\lim_{m \to \infty} \alpha_m(p) \le \alpha(p)$.
	On the other hand, by definition of $\alpha(p)$, 
	for any small $\epsilon>0$, there is an $n$ such that
	\[
	\alpha(p)-\epsilon \le \mathbb{P}_p(o \leftrightarrow (n,0))^{\frac{1}{n}}.
	\]
	By definition of $\alpha_m(p)$, 
	for any $n \ge 1$, we have
	\[
	\mathbb{P}_p^{G_{\bullet,m}}(o \leftrightarrow (n,0))
	\le \alpha_m(p)^n.
	\]
	From these two inequalities, we have
	\[
	\left( \alpha(p)-\epsilon \right)^n
	\le \mathbb{P}_p(o \leftrightarrow (n,0))
	=\lim_{m \to \infty}\mathbb{P}_p^{G_{\bullet,m}}(o \leftrightarrow (n,0))
	\le \lim_{m \to \infty}\alpha_m(p)^n.
	\]
	Hence we have 
	$\displaystyle\alpha(p)-\epsilon \le  \lim_{m \to \infty}\alpha_m(p)$.	
	It completes the proof.
	\end{proof}

	Let $\pi$ be a natural projection from $T_d \Box \mathbb{Z}$ to $T_d$.
	We define functions $\alpha^\prime(p), \alpha_m^\prime(p)$ 
	similarly to $\alpha(p), \alpha_m(p)$.
	\begin{align*}
	\alpha^\prime(p)
	&=\sup_{n \ge 1} \mathbb{P}_p(o \leftrightarrow \pi^{-1}(n))^{\frac{1}{n}},\\
	\alpha_m^\prime(p)
	&=\lim_{n \to \infty} \mathbb{P}_p^{G_{\bullet,m}}
	(o \leftrightarrow \pi^{-1}(n))^{\frac{1}{n}}
	=\sup_{n \ge 1} \mathbb{P}_p^{G_{\bullet,m}}
	(o \leftrightarrow \pi^{-1}(n))^{\frac{1}{n}}.
	\end{align*}
	We check on the existence of a limit defining the function $\alpha_m(p)$.
	Let $E_n$ be an event that all edges of $\pi^{-1}(n) \cap G_{\bullet,m}$ are open.
	By using FKG inequality, we have
	\begin{align*}
	\mathbb{P}_p^{G_{\bullet,m}}(o \leftrightarrow \pi^{-1}(n+l))
	&\ge \mathbb{P}_p^{G_{\bullet,m}}(o \leftrightarrow \pi^{-1}(n+l) \cap E_n)\\
	&=  \mathbb{P}_p^{G_{\bullet,m}}
	(o \leftrightarrow \pi^{-1}(n) \cap E_n \cap ((n,0) \leftrightarrow \pi^{-1}(n+l)))\\
	&\ge p^{2m} \mathbb{P}_p^{G_{\bullet,m}}(o \leftrightarrow \pi^{-1}(n))
	\mathbb{P}_p^{G_{\bullet,m}}(o \leftrightarrow \pi^{-1}(l))
	\end{align*}
	for all $n,l \ge 0$.
	By using Fekete's subadditive lemma, the existence of the limit is ensured,
	and we have
	\[
	\alpha_m^\prime(p)
	=\sup_{n \ge 1} \mathbb{P}_p^{G_{\bullet,m}}
	(o \leftrightarrow \pi^{-1}(n))^{\frac{1}{n}}.
	\]
	Similarly to Lemma \ref{lem:alpha_m},
	we can show $\displaystyle\lim_{m \to \infty} \alpha_m^\prime(p)
	= \alpha^\prime(p)$.

	\begin{lem}
	\label{lem:alpha^prime}
	For all $p \in [0,1]$, we have $\alpha(p)=\alpha^\prime(p)$.
	\end{lem}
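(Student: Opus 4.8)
The plan is to prove the two inequalities $\alpha(p)\le\alpha'(p)$ and $\alpha'(p)\le\alpha(p)$ separately, and to carry out the harder second one at the level of the finite slabs $G_{\bullet,m}$, so that I can invoke the already established limits $\lim_{m\to\infty}\alpha_m(p)=\alpha(p)$ and $\lim_{m\to\infty}\alpha_m'(p)=\alpha'(p)$ at the very end. The easy direction is immediate: since $(n,0)\in\pi^{-1}(n)$, the event $\{o\leftrightarrow(n,0)\}$ is contained in $\{o\leftrightarrow\pi^{-1}(n)\}$, so $\mathbb{P}_p^{G_{\bullet,m}}(o\leftrightarrow(n,0))\le\mathbb{P}_p^{G_{\bullet,m}}(o\leftrightarrow\pi^{-1}(n))$ for every $n$ and $m$; taking $n$-th roots and the supremum over $n$ gives $\alpha_m(p)\le\alpha_m'(p)$ (and likewise $\alpha(p)\le\alpha'(p)$).

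For the reverse inequality, the tempting approach is the union bound $\mathbb{P}_p(o\leftrightarrow\pi^{-1}(n))\le\sum_{k\in\mathbb{Z}}\mathbb{P}_p(o\leftrightarrow(v_n,k))$ in the full graph, but this cannot succeed directly: when $p$ is above the uniqueness threshold the summands do not decay in $k$ and the series diverges, so no control on the exponential rate in $n$ survives. The remedy, which I expect to be the key point, is to work inside $G_{\bullet,m}$, where the fiber $\pi^{-1}(n)$ consists of only the $2m+1$ vertices $(v_n,k)$ with $|k|\le m$. For each such $k$ I would compare $\mathbb{P}_p^{G_{\bullet,m}}(o\leftrightarrow(v_n,k))$ with $\mathbb{P}_p^{G_{\bullet,m}}(o\leftrightarrow(n,0))$ as follows: let $F$ be the increasing event that the (at most $m$) vertical edges of $G_{\bullet,m}$ joining $(v_n,0)$ to $(v_n,k)$ are all open. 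Then $\{o\leftrightarrow(v_n,k)\}\cap F\subseteq\{o\leftrightarrow(n,0)\}$, while the FKG inequality gives $\mathbb{P}_p^{G_{\bullet,m}}(\{o\leftrightarrow(v_n,k)\}\cap F)\ge p^{|k|}\,\mathbb{P}_p^{G_{\bullet,m}}(o\leftrightarrow(v_n,k))$, whence $\mathbb{P}_p^{G_{\bullet,m}}(o\leftrightarrow(v_n,k))\le p^{-|k|}\,\mathbb{P}_p^{G_{\bullet,m}}(o\leftrightarrow(n,0))\le p^{-m}\,\mathbb{P}_p^{G_{\bullet,m}}(o\leftrightarrow(n,0))$.

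Combining the union bound over the $2m+1$ fiber vertices with this per-term estimate yields the single inequality
\[
\mathbb{P}_p^{G_{\bullet,m}}(o\leftrightarrow\pi^{-1}(n))\le(2m+1)\,p^{-m}\,\mathbb{P}_p^{G_{\bullet,m}}(o\leftrightarrow(n,0)).
\]
The prefactor $(2m+1)p^{-m}$ is independent of $n$, so raising to the power $1/n$ and letting $n\to\infty$ gives $\alpha_m'(p)\le\alpha_m(p)$. Together with the easy direction this shows $\alpha_m(p)=\alpha_m'(p)$ for every $m$, and letting $m\to\infty$ and using $\lim_m\alpha_m(p)=\alpha(p)$, $\lim_m\alpha_m'(p)=\alpha'(p)$ produces $\alpha(p)=\alpha'(p)$. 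This argument is valid for $p>0$; the case $p=0$ is trivial, since every connection probability with $n\ge1$ vanishes and both functions are $0$.

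The conceptual obstacle, and the only genuinely nonroutine step, is precisely the passage to slabs: it simultaneously makes the fiber finite (so the combinatorial factor is only $2m+1$) and caps the FKG insertion cost at $p^{-m}$, both of which are sub-exponential in $n$ and therefore disappear after taking $n$-th roots. Once one sees that working in $G_{\bullet,m}$ resolves the divergence that wrecks the full-graph union bound, the remaining estimates are standard applications of FKG and of Fekete's lemma already used to define the relevant quantities.
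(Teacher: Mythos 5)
Your proposal is correct and follows essentially the same route as the paper: reduce to the slabs $G_{\bullet,m}$, bound $\mathbb{P}_p^{G_{\bullet,m}}(o \leftrightarrow \pi^{-1}(n))$ by a union bound over the $2m+1$ fiber vertices with a prefactor that is sub-exponential in $n$, conclude $\alpha_m(p)=\alpha_m^\prime(p)$ after taking $n$-th roots, and pass to the limit $m\to\infty$ using $\lim_m\alpha_m(p)=\alpha(p)$ and $\lim_m\alpha_m^\prime(p)=\alpha^\prime(p)$. The only difference is cosmetic and in your favor: the paper directly asserts $\mathbb{P}_p^{G_{\bullet,m}}(o \leftrightarrow (n,k)) \le \alpha_m(p)^n$ for all $|k|\le m$, whereas your FKG insertion of the vertical path, at cost $p^{-|k|}\le p^{-m}$ (with $p=0$ handled trivially), supplies an explicit justification of the per-vertex estimate that the paper leaves implicit.
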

	
	\begin{proof}
	It is clear that $\alpha_m(p) \le \alpha_m^\prime(p)$.
	On the other hand, we have
	\begin{align*}
	\mathbb{P}_p ^{G_{\bullet,m}}(o \leftrightarrow \pi^{-1}(n))
	&\le \sum_{|k| \le m} \mathbb{P}_p^{G_{\bullet,m}}(o \leftrightarrow (n,k))
	\le (2m+1) \alpha_m(p)^n,\\
	\alpha_m(p)^\prime=
	\lim_{n \to \infty } \mathbb{P}_p^{G_{\bullet,m}}
	(o \leftrightarrow \pi^{-1}(n))^{\frac{1}{n}}
	&\le \lim_{n \to \infty} (2m+1)^{\frac{1}{n}} \alpha_m(p)
	=\alpha_m(p).
	\end{align*}
	Then $\alpha_m(p)=\alpha_m(p)^\prime$ holds for all $m \ge 0$.
	By taking the limit, we have $\alpha(p)=\alpha^\prime(p)$.
	\end{proof}

	\noindent \textit{Proof of Lemma \ref{lem:alpha(p>p_c)}}
	We know another definition of the critical probability.
	Let $(o \leftrightarrow \infty)$ be an event that 
	there exists an infinite open path from the origin.
	Then we have
	\[
	p_c= \sup \left\{ p\in[0,1] \mid \mathbb{P}_p(o \leftrightarrow \infty)=0 \right\}.
	\]
	Let $B(n) \subset T_d$ be a $n$-ball whose center is the origin,
	and we set $G_{n,\bullet}=B(n) \Box \mathbb{Z}$.
	If $(o \leftrightarrow \infty)$ occurs on $T_d \Box \mathbb{Z}$,
	then $(o \leftrightarrow \partial B(n) \Box \mathbb{Z})$ or 
	$(o \leftrightarrow \infty)$ occur on $G_{n,\bullet}$.
	It is clear that $p_c(G_{n,\bullet})=1$. 
	Hence, by using Lemma \ref{lem:alpha^prime}, we have
	\begin{align*}
	\mathbb{P}_p(o \leftrightarrow \infty)
	&\le \mathbb{P}_p(o \leftrightarrow \partial B(n) \Box \mathbb{Z})
	+\mathbb{P}_p^{G_{n,\bullet}}(o \leftrightarrow \infty)\\
	&\le \sum_{x \in \partial B(n)} \mathbb{P}_p(o \leftrightarrow \pi^{-1}(x))
	\le d(d-1)^{n-1} \alpha(p)^n
	\end{align*}
	for all $p<1$.
	The right-hand side goes to 0 if $\alpha(p)<1/(d-1)$.
	Since $\mathbb{P}_p(o \leftrightarrow \infty)>0$ holds when $p>p_c$,
	Then we have $\alpha(p) \ge 1/(d-1)$ for all $p>p_c$.
	\begin{flushright} $\Box$ \end{flushright}

	\section{Extension of some theorems}
	\label{sc:extension}
	In Bernoulli percolation,
	some theorems can only be applied to an event which depends on finite edges.
	For edge subset $F$, let $[ \omega]_F$ be a subset of $\Omega$
	whose elements have the same configuration as $\omega$ on $F$.
	An event $A$ is said to depend on (only) finite edges 
	if there exists finite edge set $F$ such that 
	$[\omega]_F \subset A$ or $[\omega]_F \cap A=\emptyset$ holds 
	for all $\omega \in \Omega$.
	For $\omega, \tau \in \Omega$, we write $\omega \le \tau$ 
	if $\omega(e) \le \tau(e)$ holds for all $e \in E$.
	An event $A$ is called increasing
	if $\tau \in A$ whenever $\omega \in A$ and $\omega \le \tau$.

	\begin{thm}[\cite{Grimmett} (2.39)]
	\label{thm:stri}
	Let $A$ be an increasing event which depends on finite edges.
	Then we have
	\[
	\mathbb{P}_{p^\gamma}(A) \le \mathbb{P}_p(A)^\gamma
	\]
	for all $0<p<1$ and $\gamma \ge 1$.
	\end{thm}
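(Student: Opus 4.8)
The plan is to prove an equivalent ``reversed'' form of the inequality and then induct on the number of edges on which $A$ depends. Write $h(p)=\mathbb{P}_p(A)$ and substitute $\beta=1/\gamma\in(0,1]$ together with $q=p^{\gamma}$, so that $p=q^{\beta}$. Since $h\ge 0$, the asserted bound $\mathbb{P}_{p^{\gamma}}(A)\le\mathbb{P}_p(A)^{\gamma}$ is then seen to be equivalent to
\[
h(q^{\beta})\ge h(q)^{\beta}\qquad\text{for all } q\in(0,1)\text{ and }\beta\in(0,1].
\]
I would work with this form because, unlike the original, it is stable under the pivotal (conditioning) decomposition used below; the cases $p\in\{0,1\}$ are trivial.

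The engine of the proof is the elementary numerical inequality, which I will call $(\dagger)$: for all $a\ge b\ge 0$, $r\in[0,1]$ and $\beta\in(0,1]$,
\[
r^{\beta}a^{\beta}+(1-r^{\beta})b^{\beta}\ge\bigl(ra+(1-r)b\bigr)^{\beta}.
\]
Both sides agree when $b=0$ and when $a=b$, so after dividing by $a^{\beta}$ and setting $c=b/a\in[0,1]$ it suffices to show that $g(c)=r^{\beta}+(1-r^{\beta})c^{\beta}-(r+(1-r)c)^{\beta}$ is nonnegative on $[0,1]$. I would note $g(0)=g(1)=0$, $g'(0^{+})=+\infty$ and $g'(1)=\beta(r-r^{\beta})\le 0$ (using $r^{\beta}\ge r$), and then establish unimodality of $g$ by checking that the ratio of the two positive terms appearing in $g'(c)$ is monotone in $c$; a single sign change of $g'$ together with the vanishing endpoints forces $g\ge 0$, which is $(\dagger)$.

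With $(\dagger)$ in hand I would induct on the number $N$ of edges on which $A$ depends. The base case $N\le 1$ gives $h\equiv 0$, $h\equiv 1$, or $h(q)=q$, and in each the required bound is immediate. For the inductive step, fix one edge $e$ and condition on its state to get $h(q)=q\,h_1(q)+(1-q)\,h_0(q)$, where $h_1$ and $h_0$ are the reliabilities of the events $A_1,A_0$ obtained by forcing $e$ open, respectively closed. One checks that $A_1$ and $A_0$ are increasing events on the remaining $N-1$ edges and that $A_0\subseteq A_1$, so $h_0\le h_1$. Applying the induction hypothesis to $h_1,h_0$ and then $(\dagger)$ with $a=h_1(q)$, $b=h_0(q)$, $r=q$ yields
\[
h(q^{\beta})=q^{\beta}h_1(q^{\beta})+(1-q^{\beta})h_0(q^{\beta})\ge q^{\beta}h_1(q)^{\beta}+(1-q^{\beta})h_0(q)^{\beta}\ge h(q)^{\beta},
\]
which closes the induction and proves the theorem.

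The main obstacle is the inequality $(\dagger)$; the rest is organizational. It is exactly here that both hypotheses are consumed: $\beta\le 1$ (that is, $\gamma\ge 1$) makes $c\mapsto c^{\beta}$ concave and controls the sign of $g'(1)$, while the ordering $a\ge b$ is supplied by the monotonicity $h_0\le h_1$, which in turn comes from $A$ being an increasing event. I would take care to present the unimodality argument cleanly, since that is the only genuinely analytic step.
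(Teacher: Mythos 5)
Your proof is correct, but note that the paper itself offers no proof of this statement to compare against: it is imported verbatim from \cite{Grimmett} as inequality (2.39), where it belongs to the classical reliability-theory circle of ideas, so you have supplied an argument where the paper only cites. Your route --- passing to the equivalent form $h(q^{\beta})\ge h(q)^{\beta}$, inducting on the number of relevant edges via the decomposition $h(q)=q\,h_1(q)+(1-q)\,h_0(q)$ with $h_0\le h_1$ coming from monotonicity of $A$, and reducing everything to the scalar inequality $(\dagger)$ --- is sound in every step; in particular your verification of $(\dagger)$ works: $g(0)=g(1)=0$, and $g'$ changes sign exactly once because the ratio of the two terms in $g'(c)$ equals $\frac{1-r^{\beta}}{1-r}\bigl(\frac{r}{c}+(1-r)\bigr)^{\beta-1}$, which is strictly monotone in $c$, while the degenerate cases $r\in\{0,1\}$ and $\beta=1$ give $g\equiv 0$. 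It is worth emphasizing what your argument buys: the quick coupling proof one usually quotes (take $\gamma$ independent Bernoulli($p$) configurations, observe their pointwise minimum is Bernoulli($p^{\gamma}$), and use that $A$ is increasing) yields only integer $\gamma$, and the resulting submultiplicativity $h(p_1p_2)\le h(p_1)h(p_2)$ does not by itself interpolate to real $\gamma\ge 1$; your $(\dagger)$-based induction is precisely what closes that gap, so the analytic step you flag as the main obstacle is genuinely necessary. One cosmetic quibble: your motivating claim that the original form is not stable under the pivotal decomposition is overstated. Raising $(\dagger)$ to the power $1/\beta$ and substituting $s=r^{\beta}$, $a'=a^{\beta}$, $b'=b^{\beta}$ gives $(sa'+(1-s)b')^{\gamma}\ge s^{\gamma}a'^{\gamma}+(1-s^{\gamma})b'^{\gamma}$ for $a'\ge b'$ and $\gamma\ge 1$, with which the induction runs equally well in the direct form $h(p^{\gamma})\le h(p)^{\gamma}$; this affects only the exposition, not the correctness.
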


	For two events $A$ and $B$,
	$A \circ B$ is defined as the event that A and B occur on disjoint edge sets,
	formulated by 
	\[
	A \circ B = \left\{ \omega \in \Omega \mid
	\exists \text{finite disjoint } K,L \subset E \text{ s.t. } 
	[\omega]_K \subset A, [\omega]_L \subset B \right\}.
	\]
	
	\begin{thm}[BK inequality \cite{BK}]
	\label{thm:BK}
	Let $A,B$ be increasing events which depends on finite edges.
	Then we have 
	\[
	\mathbb{P}_p(A \circ B) \le \mathbb{P}_p(A) \mathbb{P}_p(B).
	\]
	\end{thm}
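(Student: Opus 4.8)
The plan is to prove the inequality by induction on the number $N$ of edges on which $A$ and $B$ depend, which is finite by hypothesis. To make the induction close up I would prove the slightly more general assertion that $\mu(A \circ B) \le \mu(A)\,\mu(B)$ for \emph{every} product measure $\mu = \bigotimes_{i=1}^N \mu_i$ on $\{0,1\}^N$ with arbitrary marginals $\mu_i(\{1\}) = p_i$, since conditioning on a single coordinate destroys the uniformity of the parameter. For increasing events it is convenient to record first that, because $A$ is increasing, the condition $[\omega]_K \subset A$ is equivalent to $\omega^K \in A$, where $\omega^K$ is the configuration agreeing with $\omega$ on $K$ and equal to $0$ off $K$; thus $\omega \in A \circ B$ if and only if there exist disjoint $K, L$ with $\omega^K \in A$ and $\omega^L \in B$. (For increasing events this direct argument is much cleaner than appealing to the general theorem of Reimer.)

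The base case $N = 1$ is a direct check: the only increasing events on $\{0,1\}$ are $\emptyset$, $\{\omega(e_1)=1\}$, and the whole space, and in each combination the disjointness requirement forces the bound, the only nontrivial case being $A=B=\{\omega(e_1)=1\}$, for which $A\circ B=\emptyset$. For the inductive step I would single out the last edge $e_N$ and decompose each event into its sections. Writing $\omega = (\eta,x)$ with $\eta \in \{0,1\}^{N-1}$ and $x=\omega(e_N)$, set $A_x = \{\eta : (\eta,x)\in A\}$ and likewise $B_x$; these are increasing events on $N-1$ coordinates with $A_0 \subseteq A_1$ and $B_0 \subseteq B_1$.

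The crucial, and most delicate, step is the set-theoretic analysis of the sections of $A\circ B$. By considering whether the open edge $e_N$ is assigned to the witness set $K$ of $A$, to the witness set $L$ of $B$, or to neither, one obtains the inclusions
\[
(A \circ B)_0 \subseteq A_0 \circ B_0, \qquad
(A \circ B)_1 \subseteq (A_1 \circ B_0) \cup (A_0 \circ B_1),
\]
where on the right $\circ$ denotes disjoint occurrence among the first $N-1$ edges. This is the heart of the argument and the place where increasingness is genuinely used: a closed edge can never belong to the witness of an increasing event, so a witness in the section at $x=0$ may be taken inside $\{e_1,\dots,e_{N-1}\}$, which yields the first inclusion. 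I expect this combinatorial decomposition to be the main obstacle; once it is in place the rest is bookkeeping.

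Finally I would combine these inclusions with inclusion--exclusion, using $A_0 \circ B_0 \subseteq (A_1 \circ B_0)\cap(A_0 \circ B_1)$, to bound
\[
\mu\bigl((A\circ B)_1\bigr) \le \mu(A_1 \circ B_0) + \mu(A_0 \circ B_1) - \mu(A_0 \circ B_0),
\]
and then invoke the induction hypothesis $\mu(A_i \circ B_j) \le \mu(A_i)\mu(B_j)$ on $N-1$ coordinates. Writing $a_x=\mu(A_x)$, $b_x=\mu(B_x)$ and $p=p_N$, the target inequality $\mathbb{P}_p(A\circ B) = (1-p)\mu((A\circ B)_0) + p\,\mu((A\circ B)_1) \le \mathbb{P}_p(A)\mathbb{P}_p(B)$ reduces, after expanding both sides, to the statement that their difference equals $p^2(a_1-a_0)(b_1-b_0)$, which is nonnegative precisely because $a_0 \le a_1$ and $b_0 \le b_1$. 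This closes the induction, and the routine one-line algebra at the end carries no conceptual difficulty.
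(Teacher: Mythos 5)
Your reduction to arbitrary product measures, the base case, and both section inclusions $(A\circ B)_0\subseteq A_0\circ B_0$ and $(A\circ B)_1\subseteq(A_1\circ B_0)\cup(A_0\circ B_1)$ are all correct, but the step you dismiss as ``routine bookkeeping'' contains a genuine gap, and it is fatal whenever the parameter of the split coordinate exceeds $1/2$. Writing $a_x=\mu(A_x)$, $b_x=\mu(B_x)$ and $p=p_N$, your inclusion--exclusion step gives the upper bound
\[
(1-p)\,\mu(A_0\circ B_0)+p\bigl(\mu(A_1\circ B_0)+\mu(A_0\circ B_1)-\mu(A_0\circ B_0)\bigr)
=(1-2p)\,\mu(A_0\circ B_0)+p\,\mu(A_1\circ B_0)+p\,\mu(A_0\circ B_1),
\]
in which $\mu(A_0\circ B_0)$ carries the coefficient $1-2p$. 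Your claimed identity (difference $=p^2(a_1-a_0)(b_1-b_0)$) is obtained by substituting the induction hypothesis $\mu(A_i\circ B_j)\le a_ib_j$ into \emph{every} term; but for $p>1/2$ the term $\mu(A_0\circ B_0)$ enters negatively, so replacing it by its upper bound $a_0b_0$ makes the expression smaller, not larger --- you would need a matching \emph{lower} bound on $\mu(A_0\circ B_0)$, which is false in general, since $A_0\circ B_0$ can be empty while $a_0b_0>0$. The loss is real, not presentational: take $N=2$, $A=B=\{\omega(e_1)=1\}\cup\{\omega(e_2)=1\}$, $p_1=p_2=0.9$. Then $A_0\circ B_0=\emptyset$ and $\mu(A_1\circ B_0)=\mu(A_0\circ B_1)=0.9$ (here $A_1$ is the full space, so the empty witness serves), so your intermediate bound equals $0.1\cdot 0+0.9\,(0.9+0.9-0)=1.62$, while the target is $\mathbb{P}_p(A)\mathbb{P}_p(B)=(0.99)^2=0.9801$ (the true value of $\mathbb{P}_p(A\circ B)$ is $0.81$, so BK itself of course holds). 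The chain is thus broken \emph{before} the final algebra: bounding $\mu\bigl((A_1\circ B_0)\cap(A_0\circ B_1)\bigr)$ below by $\mu(A_0\circ B_0)$ is far too lossy, and producing a usable lower bound for that intersection is exactly where the difficulty of the BK inequality sits. Note also that your argument is perfectly valid when every $p_i\le 1/2$, and the $0\leftrightarrow 1$ flip only converts the case where every $p_i\ge 1/2$, so mixed parameter vectors remain uncovered; the induction cannot be closed in this form.

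For comparison: the paper offers no proof of this statement --- it is imported verbatim from van den Berg and Kesten \cite{BK} --- so there is no internal argument to measure yours against. The standard proof (the original one in \cite{BK}, also presented in \cite{Grimmett}, Section 2.3) avoids your sign problem by an entirely different mechanism: one replaces the edges one at a time by two independent parallel copies, reserving the first copy for the witness of $A$ and the second for the witness of $B$, and shows by a conditioning argument (this is where increasingness is used) that each single replacement can only increase the probability of disjoint occurrence; after all edges have been duplicated, $A$ and $B$ depend on disjoint coordinate sets and the probability factorizes as $\mathbb{P}_p(A)\mathbb{P}_p(B)$. If you want to salvage a section-based induction instead, you must strengthen the inductive statement (for instance to a difference inequality of the type $\mu(A\circ B')-\mu(A\circ B)\le\mu(A)\bigl(\mu(B')-\mu(B)\bigr)$ for increasing $B\subseteq B'$, which specializes to BK at $B=\emptyset$); the plain four-term hypothesis $\mu(A_i\circ B_j)\le a_ib_j$ is demonstrably insufficient.
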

	
	We will extend these two theorems so that 
	it can be applied to certain events which depends on infinite edges.
	Let $K\subset E$ be a finite edge subset,
	and $L \subset E$ be an edge subset which may be infinite.
	An event $(K \leftrightarrow L)$ is called a connection event,
	for example $(o \leftrightarrow x)$ or $(o \leftrightarrow \pi^{-1}(x))$.
	It is clear that a connection event is an increasing event,
	and depends on infinite edges in general.

	\begin{lem}
	\label{lem:stri+}
	Let $A$ be a connection event.
	Then we have
	\[
	\mathbb{P}_{p^\gamma}(A) \le \mathbb{P}_p(A)^\gamma
	\]
	for all $0<p<1$ and $\gamma \ge 1$.
	\end{lem}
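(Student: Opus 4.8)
The plan is to approximate the connection event $A=(K \leftrightarrow L)$ by a sequence of increasing events, each depending on only finitely many edges, apply Theorem \ref{thm:stri} to each of them, and then pass to the limit using continuity of the probability measure. Since $A$ is already increasing and a connection is always realized by a finite open path, such an approximation should be available.

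First I would fix an exhaustion of the edge set by finite subsets. As $G=T_d \Box \mathbb{Z}$ is locally finite, I can choose finite edge sets $F_1 \subset F_2 \subset \cdots$ with $\bigcup_{n} F_n = E$; for instance $F_n$ may be taken to be the edge set of the ball $B(n)$ around $o$. For each $n$, let $A_n$ be the event that there is an open path from $K$ to $L$ using only edges of $F_n$. Then each $A_n$ is an increasing event depending only on the finite edge set $F_n$, and $A_n \subset A_{n+1} \subset A$. The key observation is that $\bigcup_{n} A_n = A$: if $\omega \in A$, then the connection from $K$ to $L$ is realized by a single finite open path, whose finitely many edges all lie in $F_n$ once $n$ is large enough, so $\omega \in A_n$. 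By continuity of the measure from below, $\mathbb{P}_q(A_n) \uparrow \mathbb{P}_q(A)$ as $n \to \infty$ for every fixed $q \in [0,1]$.

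Finally I would apply Theorem \ref{thm:stri} to each finite-edge increasing event $A_n$, which gives
\[
\mathbb{P}_{p^\gamma}(A_n) \le \mathbb{P}_p(A_n)^\gamma
\]
for all $0<p<1$ and $\gamma \ge 1$. Letting $n \to \infty$, the left-hand side tends to $\mathbb{P}_{p^\gamma}(A)$ by continuity from below, while the right-hand side tends to $\mathbb{P}_p(A)^\gamma$ by the same convergence together with the continuity of $t \mapsto t^\gamma$; since the inequality is preserved under limits, we obtain $\mathbb{P}_{p^\gamma}(A) \le \mathbb{P}_p(A)^\gamma$, as required. No step here is a serious obstacle; the only point that genuinely needs care is verifying $\bigcup_{n} A_n = A$, and this reduces to the elementary fact that every open connection between two vertices uses only finitely many edges.
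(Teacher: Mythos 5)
Your proof is correct and takes essentially the same approach as the paper: both approximate the connection event from below by increasing events depending on only finitely many edges and then apply Theorem \ref{thm:stri} and pass to the limit. The paper realizes the approximation as a finite union of single-path events $\bigcup_{q \in \Gamma'}(q:\text{open})$ chosen via an $\epsilon$-argument, while you use paths confined to a finite exhaustion $F_n$ together with continuity from below --- an interchangeable formulation of the same idea.
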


	\begin{lem}
	\label{lem:BK+}
	Let $A,B$ be connection events.
	Then we have 
	\[
	\mathbb{P}_p(A \circ B) \le \mathbb{P}_p(A) \mathbb{P}_p(B).
	\]
	\end{lem}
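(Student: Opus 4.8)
The plan is to approximate the connection events $A$ and $B$, which depend on infinitely many edges, by increasing sequences of finite-dependent increasing events, apply the ordinary BK inequality (Theorem \ref{thm:BK}) to these approximants, and pass to the limit using continuity of $\mathbb{P}_p$. Write $A=(K_A \leftrightarrow L_A)$ and $B=(K_B \leftrightarrow L_B)$ with $K_A,K_B$ finite. Since $G=T_d \Box \mathbb{Z}$ is locally finite, the balls $B(r)$ centered at $o$ are finite and exhaust $G$. For each $r$ I would let $A^{(r)}$ be the event that $K_A$ is joined to $L_A$ by an open path using only edges of $B(r)$, and define $B^{(r)}$ similarly. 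Each $A^{(r)}$ is increasing and depends only on the finitely many edges of $B(r)$, so Theorem \ref{thm:BK} applies and yields $\mathbb{P}_p(A^{(r)} \circ B^{(r)}) \le \mathbb{P}_p(A^{(r)}) \mathbb{P}_p(B^{(r)})$ for every $r$.

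The heart of the argument is to verify the two monotone-convergence statements $A^{(r)} \uparrow A$ and $A^{(r)} \circ B^{(r)} \uparrow A \circ B$ (and likewise for $B$). The first is immediate: any open path realizing the connection is finite, hence contained in $B(r)$ for $r$ large, so $\bigcup_r A^{(r)}=A$; continuity from below of the measure then gives $\mathbb{P}_p(A^{(r)}) \to \mathbb{P}_p(A)$ and $\mathbb{P}_p(B^{(r)}) \to \mathbb{P}_p(B)$. For the second, the inclusion $A^{(r)} \circ B^{(r)} \subset A \circ B$ and the monotonicity in $r$ follow directly from $A^{(r)} \subset A^{(r+1)} \subset A$.

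The reverse inclusion $\bigcup_r (A^{(r)} \circ B^{(r)}) \supset A \circ B$ is the step requiring care. If $\omega \in A \circ B$ with disjoint finite witness sets $K,L$, then because $A$ is a connection event and $A$ is increasing, the condition $[\omega]_K \subset A$ forces an open path inside $K$ from $K_A$ to $L_A$ (apply the definition to the configuration equal to $\omega$ on $K$ and closed elsewhere). This path is finite, so it lies in $B(r)$ for $r$ large, whence $[\omega]_K \subset A^{(r)}$; symmetrically $[\omega]_L \subset B^{(r)}$ for $r$ large. Taking $r$ large enough for both gives $\omega \in A^{(r)} \circ B^{(r)}$, which establishes the claimed identity $\bigcup_r (A^{(r)} \circ B^{(r)}) = A \circ B$.

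Granting these facts, I take $r \to \infty$ in the BK inequality for the approximants: the left-hand side converges to $\mathbb{P}_p(A \circ B)$ by continuity along the increasing events $A^{(r)} \circ B^{(r)}$, while the right-hand side converges to $\mathbb{P}_p(A)\mathbb{P}_p(B)$, which is the desired bound. The main obstacle is precisely the identification $\bigcup_r (A^{(r)} \circ B^{(r)}) = A \circ B$; it rests on the observation that a finite witness set for a connection event can be replaced by an actual finite open path, which is what allows a single $r$ to serve simultaneously for $A$ and for $B$.
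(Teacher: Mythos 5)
Your proposal is correct and takes essentially the same approach as the paper: there the connection events are exhausted by the finite-dependent increasing events $C_i^{(n)}$ (unions of open paths of length at most $n$) instead of your ball-restricted events $A^{(r)}$, but the crucial step is identical --- converting a finite witness set $F_i$ for the disjoint occurrence into an actual finite open path, so that $A\circ B$ is contained in the union of the approximating disjoint occurrences, then applying Theorem \ref{thm:BK} and passing to the limit. The only difference is the truncation parameter (path length $n=\max\{|F_i|\}$ in the paper versus ball radius $r$ in your argument), which is immaterial.
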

	
	\noindent \textit{Proof of Lemma \ref{lem:stri+} and Lemma \ref{lem:BK+}}
	Let $A=(K \leftrightarrow L)$,
	$\Gamma$ be a set of all paths between $K$ and $L$.
	Then we have 
	\[
	A= \bigcup_{q \in \Gamma} (q : open).
	\]
	Each event $(q:open)$ is increasing and it depends on finite edges.
	For any $\epsilon>0$, 
	there exists a finite subset $\Gamma^\prime \subset \Gamma$ such that 
	\[
	\mathbb{P}_{p^\gamma}(A)- \epsilon \le \mathbb{P}_{p^\gamma}
	\left( \bigcup_{q \in \Gamma^\prime} (q : open) \right).
	\]
	The event in the right-hand side is increasing and it depends on finite edges.
	Then by using Theorem \ref{thm:stri}, we have
	\[
	\mathbb{P}_{p^\gamma}(A)- \epsilon
	\le \mathbb{P}_p\left( \bigcup_{q \in \Gamma^\prime} (q : open) \right)^\gamma
	\le \mathbb{P}_p(A)^\gamma.
	\]
	It completes the proof of Lemma \ref{lem:stri+}.
	Next we will show Lemma \ref{lem:BK+}.
	Let $A_i=(K_i \leftrightarrow L_i)$,
	$\Gamma_i^{(n)}$ be a set of all paths between 
	$K_i$ and $L_i$ with length $n$ or less,
	$\displaystyle C_i^{(n)}=\bigcup_{q \in \Gamma_i^{(n)} }(q:open)$ for $i=1,2$.
	Then we have
	\[
	A_i=\bigcup_{n \ge 1} C_i^{(n)}.
	\]
	If $\omega \in A_1 \circ A_2$,
	then there exists finite disjoint subset $F_1,F_2 \subset E$ such that
	$[\omega]_{F_i} \subset A_i$.
	We take $n=\max \{ |F_i| \}$, then we have $[\omega]_{F_i} \subset C_i^{(n)}$,
	that is $\omega \in C_1^{(n)} \circ C_2^{(n)}$.
	Hence, we have 
	\[
	A_i \circ A_2 \subset \bigcup_{n \ge 1} 
	\left(  C_1^{(n)} \circ C_2^{(n)} \right).
	\]
	Since $K_1, K_2$ are finite,
	each of the events $ C_1^{(n)}$ and $C_2^{(n)}$ is increasing 
	and depends on finite edges.
	Then by using Theorem \ref{thm:BK}, we have
	\[
	\mathbb{P}_p(A_1 \circ A_2)
	\le \lim_{n \to \infty} \mathbb{P}_p \left(C_1^{(n)} \circ C_2^{(n)}\right)
	\le \lim_{n \to \infty} \left( \mathbb{P}_p\left(C_1^{(n)}\right) 
	\mathbb{P}_p\left(C_2^{(n)}\right) \right)
	= \mathbb{P}_p(A_1)\mathbb{P}_p(A_1).
	\]
	\begin{flushright} $\Box$ \end{flushright}

	At the end of this section,
	we show the first half of Theorem \ref{thm:conti}. 
	By using Lemma \ref{lem:stri+},
	we have 
	\[
	\alpha(p^\gamma) 
	= \lim_{n \to \infty} \mathbb{P}_{p^\gamma} (o \leftrightarrow (n,0))^{\frac{1}{n}}
	\le  \lim_{n \to \infty} \mathbb{P}_p (o \leftrightarrow (n,0))^{\frac{\gamma}{n}}
	=\alpha(p)^\gamma.
	\]
	We know $\alpha(p)<1$ for all $p \le p_u$ from Theorem \ref{thm:Schonmann}.
	Then we have 
	\[
	\alpha(p^\gamma) \le \alpha(p)^\gamma <\alpha(p)
	\]
	for all $p \le p_u$ and $\gamma>1$.
	Therefore $\alpha(p)$ is a strictly increasing function on $[0,p_u]$.

	\section{Connection event}
	\label{sc:conn.event}
	In Section \ref{sc:extension}, we define a connection event,
	and we prepared some lemmas concerning connection events.
	In this section, we prepar one more lemma concerning connection events.
	A graph $G$ is called nonamenable if the Cheeger constant of $G, h(G)$,
	defined by
	\[	
	h(G)= \inf \left\{ \frac{|\partial S|}{|S|} \mid S \subset V, |S|< \infty \right\}.
	\] 
	 is positive.
	\begin{thm}[\cite{BLPS}]
	\label{thm:Cheeger}
	Let $G$ be a nonamenable Cayley graph. Then we have
	\[	
	\mathbb{P}_{p_c}(o \leftrightarrow \infty)=0.
	\]
	\end{thm}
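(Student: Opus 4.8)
This is the theorem of Benjamini, Lyons, Peres and Schramm that critical percolation on a nonamenable Cayley graph has no infinite cluster, so my plan is to follow the structure of a mass-transport argument. Write $\theta(p)=\mathbb{P}_p(o\leftrightarrow\infty)$; the goal is $\theta(p_c)=0$. Two structural features of $G$ drive everything. First, a Cayley graph is unimodular, so the mass-transport principle is available: for every $\mathrm{Aut}(G)$-diagonally-invariant $f\colon V\times V\to[0,\infty]$ one has $\sum_{y}f(o,y)=\sum_{y}f(y,o)$. Second, nonamenability means $h=h(G)>0$, which gives the deterministic isoperimetric bound $|\partial C|\ge h\,|C|$ for every finite cluster $C$.

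I would argue by contradiction and assume $\theta(p_c)>0$, so that an infinite cluster exists $\mathbb{P}_{p_c}$-almost surely. The strategy is to promote the pointwise bound $|\partial C|\ge h\,|C|$ into an analytic inequality for an order parameter and then show this inequality is incompatible with $p_c$ being critical. A convenient order parameter is the ghost-field magnetization $M(p,\gamma)=\mathbb{P}_{p,\gamma}(o\leftrightarrow\mathfrak{g})$, where each vertex is independently joined to an external ghost $\mathfrak{g}$ with probability $1-e^{-\gamma}$; one has $M(p,\gamma)\downarrow\theta(p)$ as $\gamma\downarrow 0$. The aim is a differential inequality of Aizenman--Barsky type, namely $M\le\gamma\,\partial_\gamma M+M^2+p\,M\,\partial_p M$, but sharpened by a term that genuinely uses $h>0$, so that passing to $p=p_c$ and $\gamma\downarrow0$ forces $\theta(p_c)=0$.

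The main step, and the part I expect to be hardest, is to turn the isoperimetric bound into the extra term in the differential inequality. Here I would use the mass-transport principle to rewrite the expected cluster boundary $\mathbb{E}_p[\,|\partial C_o|/|C_o|\,;\,|C_o|<\infty]$ in terms of a local quantity at the origin, and then observe that the closed boundary edges of a cluster are exactly its candidate pivotal edges. Russo's formula converts a lower bound on the expected number of such edges into a lower bound on $\partial_p$ of the relevant connection probability, and it is through this chain that $h$ enters the estimate; the nonamenability is precisely what prevents this term from being absorbed, so that the inequality fails on amenable graphs such as $\mathbb{Z}^d$, where the corresponding continuity statement remains hard.

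I anticipate two technical obstacles. The first is analytic: justifying the mass-transport manipulations and the interchange of the limits $\gamma\downarrow0$, the infinite-volume limit, and differentiation in $p$ exactly at $p_c$, where the clusters in play are infinite. I would handle this by the same finite-volume approximation used earlier in the paper, carrying out every estimate on the balls $B(k)$ and passing to the limit monotonically. The second is conceptual: ensuring that the sharpening constant is a genuine function of $h$ and not merely of the degree, since the bare Aizenman--Barsky inequalities hold on any transitive graph and by themselves do not yield continuity at $p_c$. A technically different but logically equivalent route, which also matches this paper's vocabulary, is to note that a nonamenable graph has exponential growth $\mathrm{gr}(G)>1$ and to invoke Hutchcroft's differential inequality for graphs of exponential growth, in which the growth rate plays the role that $h$ plays above.
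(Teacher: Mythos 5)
You should first note a point of comparison that changes the whole exercise: the paper does not prove this theorem. It is imported verbatim from Benjamini, Lyons, Peres and Schramm \cite{BLPS} and used as a black box; the only work Section \ref{sc:conn.event} does is verify the hypotheses for the graph at hand, namely $h(T_d \Box \mathbb{Z})=d-2>0$ and the explicit presentation of $T_d \Box \mathbb{Z}$ as a Cayley graph of $\Gamma=\langle a_i,b \mid a_i^{-1}=a_i,\ a_ib=ba_i\rangle$. So matching the paper would mean citing the result; anything more is a reconstruction of a difficult external theorem, and your attempt should be judged as such.

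As a reconstruction, your main route has a concrete gap: the entire content is packed into the ``sharpened'' differential inequality, which you explicitly defer (``the part I expect to be hardest'') and never derive, and the ingredients you list do not assemble into it as described. The isoperimetric bound $|\partial C|\ge h|C|$ applies only to \emph{finite} clusters, whereas under your contradiction hypothesis the clusters to be controlled at $p_c$ are infinite; the identification of closed boundary edges with pivotal edges is not correct for the event $(o\leftrightarrow \mathfrak{g})$ (pivotality there also involves ghost edges, and Russo's formula requires dependence on finitely many edges --- precisely the obstruction Section \ref{sc:extension} of this paper exists to circumvent for connection events); and the mass-transport computation of $\mathbb{E}_p[\,|\partial C_o|/|C_o|\,;|C_o|<\infty]$ yields the BLPS invariant-percolation threshold --- any invariant percolation with all clusters finite has expected degree bounded away from the degree of $G$ by a quantity depending on $h$ --- which, applied to Bernoulli percolation at $p<p_c$, gives only the bound $p_c\le 1-h/d$, not $\theta(p_c)=0$. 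The actual proof in \cite{BLPS} does not go through Aizenman--Barsky inequalities at all: it combines that expected-degree threshold with the monotone coupling $\omega_{p_c-\epsilon}\subset\omega_{p_c}$ and a case analysis on the number of infinite clusters at $p_c$ (one versus infinitely many), and none of that case structure appears in your sketch. Your fallback route, by contrast, is sound at the citation level: a nonamenable group does have exponential growth (subexponential growth produces F\o lner sets from balls, hence amenability), so $\mathrm{gr}(G)>1$ and Hutchcroft's theorem \cite{Hutchcroft1}, already quoted in Section \ref{se:Intro}, gives $\mathbb{P}_{p_c}(o\leftrightarrow\infty)=0$ for nonamenable Cayley graphs. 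But be aware that this replaces one black box by another rather than supplying a proof, and the two hypotheses are not ``logically equivalent'' as you assert: there exist amenable Cayley graphs of exponential growth (lamplighter groups, for instance), so the exponential-growth theorem strictly contains the nonamenable Cayley case rather than coinciding with it.
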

	It is well-known that $h(T_d \Box \mathbb{Z})=d-2$,
	that is, $T_d \Box \mathbb{Z}$ is a nonamenable graph for all $d\ge3$.
	Also, let $S=\{a_1, \ldots, a_d, b\}$ be a generating set,
	and $ \Gamma=<a_i,b| a_i^{-1}=a_i,a_ib=ba_i>$ be a group generated by $S$,
	then $T_d \Box \mathbb{Z}$ is a Cayley graph of $(\Gamma,S)$.
	Therefore, we can use this theorem for $T_d \Box \mathbb{Z}$.

	\begin{lem}
	\label{lem:conn.conti}
	Let $G=T_d \Box \mathbb{Z}$,
	and $A$ be a connection event. 
	Then $\mathbb{P}_p(A)$ is continuous on $[0,p_c]$.
	\end{lem}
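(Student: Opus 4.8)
The plan is to use that $A=(K\leftrightarrow L)$ is increasing, so $\mathbb{P}_p(A)$ is nondecreasing in $p$, and then to establish left- and right-continuity separately: the former essentially for free, the latter by a finite-susceptibility (pivotal-edge) estimate. First I would record left-continuity on all of $[0,1]$. Let $A_n=\bigcup_{q}(q:open)$, the union over $K$--$L$ paths of length at most $n$; each $A_n$ depends on finitely many edges, so $p\mapsto\mathbb{P}_p(A_n)$ is a polynomial, and $\mathbb{P}_p(A)=\sup_n\mathbb{P}_p(A_n)$. A supremum of continuous functions is lower semicontinuous, and a nondecreasing lower semicontinuous function is left-continuous. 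In particular $\mathbb{P}_p(A)$ is left-continuous at $p_c$, which is all that is needed at the right endpoint of $[0,p_c]$.

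The substance is right-continuity on $[0,p_c)$, which I would obtain from a local Lipschitz bound. Couple all parameters by i.i.d.\ uniforms $U_e$, declaring $e$ to be $q$-open iff $U_e\le q$, and write $\omega_q$ for the resulting configuration. For $p<p'$, monotonicity gives $\mathbb{P}_{p'}(A)-\mathbb{P}_p(A)=\mathbb{P}(\omega_{p'}\in A,\ \omega_p\notin A)$. On this event every $p'$-open $K$--$L$ path must traverse an edge $e=\{u,v\}$ with $U_e\in(p,p']$, and removing $e$ leaves in $\omega_{p'}$ a disjoint pair of connections realizing $(K\leftrightarrow u)\circ(v\leftrightarrow L)$ or the symmetric pair. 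Since $U_e$ is independent of the remaining edges, the extended BK inequality for connection events (Lemma \ref{lem:BK+}) yields
\[
\mathbb{P}_{p'}(A)-\mathbb{P}_p(A)\le (p'-p)\sum_{e=\{u,v\}}\bigl[\mathbb{P}_{p'}(K\leftrightarrow u)\,\mathbb{P}_{p'}(v\leftrightarrow L)+\mathbb{P}_{p'}(K\leftrightarrow v)\,\mathbb{P}_{p'}(u\leftrightarrow L)\bigr].
\]
Bounding $\mathbb{P}_{p'}(\,\cdot\leftrightarrow L)\le 1$, using that $T_d\Box\mathbb{Z}$ has bounded degree $\Delta=d+2$, and using $\mathbb{P}_{p'}(K\leftrightarrow u)\le\sum_{k\in K}\tau_{p'}(k,u)$, the right-hand sum is at most $2\Delta|K|\,\chi_{p'}$, where $\chi_{p'}=\sum_x\tau_{p'}(o,x)$ is finite for $p'<p_c$ by Theorem \ref{thm:Aizenman2}.

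Finally I would convert this estimate into continuity. Fixing any $p''\in(p_0,p_c)$ and restricting to $p\le p'\le p''$, monotonicity of the susceptibility gives $\chi_{p'}\le\chi_{p''}<\infty$, so $p\mapsto\mathbb{P}_p(A)$ is Lipschitz with constant $2\Delta|K|\chi_{p''}$ on $[0,p'']$; since $p''<p_c$ is arbitrary, this gives continuity on $[0,p_c)$. Combined with the left-continuity at $p_c$ from the first step, this proves continuity on the closed interval $[0,p_c]$.

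I expect the main obstacle to be controlling the pivotal sum when $L$ is infinite: this is exactly where the crude bound $\mathbb{P}_{p'}(v\leftrightarrow L)\le 1$ together with bounded degree keeps the sum proportional to the finite susceptibility, so that only $K$ (and not $L$) needs to be finite. A secondary point requiring care is justifying the pivotal decomposition for an event depending on infinitely many edges; I would handle this by running the argument first for the finite-range approximations $A_n$ and then letting $n\to\infty$. Note also that the Lipschitz bound cannot be pushed to $p=p_c$, since $\chi_{p}$ may diverge there; this is why the argument at the right endpoint rests on left-continuity rather than on the susceptibility estimate.
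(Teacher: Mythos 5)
Your argument is correct as far as it goes, but it proves a strictly weaker statement than the one the paper actually needs, and the piece you omit is precisely the crux of the lemma: right-continuity at $p_0=p_c$ itself. Your work below $p_c$ is sound --- the union-bound pivotal decomposition in the uniform coupling, the independence of $U_e$ from the other coordinates, the appeal to Lemma \ref{lem:BK+} for the disjoint connections, and the finiteness of the susceptibility $\chi_{p'}=\sum_x\tau_{p'}(o,x)$ for $p'<p_c$ via Theorem \ref{thm:Aizenman2} all hold up, and in fact give a locally Lipschitz bound that is quantitatively stronger than what the paper proves on $[0,p_c)$. But at the right endpoint you settle for left-continuity, asserting that this ``is all that is needed.'' In this paper it is not: the lemma is later applied to show that $I_n^{(m)}(p)$, hence $I_n(p)$, hence $\eta(p)=\inf_n I_n(p)^{1/n}$ is right-continuous \emph{at} $p_c$ as a function on $[0,p_u)$, and since $\alpha=\eta$ there, this is exactly what yields $\alpha(p_c)=\lim_{p\downarrow p_c}\alpha(p)\ge 1/(d-1)$, hence the Corollary $\alpha(p_c)=1/(d-1)$ and Theorem \ref{thm:alpha.inverse}. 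With only continuity of the restriction to $[0,p_c]$, a jump of $\mathbb{P}_p(A)$ (and so of $\alpha$) at $p_c$ cannot be excluded, and the main theorem does not follow.

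The paper closes this gap with an idea absent from your proposal: in the same coupling $\omega_p(e)=\mathbb{1}_{\{X_e<p\}}$ it writes $\lim_{p\downarrow p_0}\left(\mathbb{P}_p(A)-\mathbb{P}_{p_0}(A)\right)=\mu(\forall p>p_0,\ \omega_p\in A,\ \omega_{p_0}\notin A)$ and then invokes Theorem \ref{thm:Cheeger} (Benjamini--Lyons--Peres--Schramm: critical percolation on a nonamenable Cayley graph, such as $T_d\Box\mathbb{Z}$, has no infinite cluster). For every $p_0\in[0,p_c]$, \emph{including} $p_0=p_c$, the clusters of the finitely many vertices of $K$ in $\omega_{p_0}$ are a.s.\ finite; so if $\omega_p\in A$ for all $p>p_0$ while $\omega_{p_0}\notin A$, some edge of a finite random subgraph $H$ containing these clusters must satisfy $X_e=p_0$ exactly, a $\mu$-null event. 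Almost-sure finiteness of critical clusters is here the correct substitute for finite susceptibility at $p=p_c$, where your Lipschitz estimate degenerates --- you correctly diagnosed that $\chi_p$ may diverge at $p_c$, but the conclusion to draw is not that right-continuity can be dropped there; it is that a different tool is required. The repair is local: keep your susceptibility argument on $[0,p_c)$ if you like, and add the coupling argument with Theorem \ref{thm:Cheeger} at $p_0=p_c$.
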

	
	\begin{proof}
	It is clear that $\mathbb{P}_p(A)$ is left-continuous similar to $\alpha(p)$, since
	\[
	\mathbb{P}_p(A)= \sup_{k \ge1} \mathbb{P}_p^{B(k)} (A)
	\]
	where $B(k)$ is a $k$-ball.
	We will prove that 
	$\mathbb{P}_p(A)$ is right-continuous on $[0,p_c]$ in this section.
	We prepare another definition of $\mathbb{P}_p$ (ref: \cite{Grimmett} section1.3).
	Let $\Omega^\prime=[0,1]^E$, 
	$\mu_e$ be a uniform distribution on $[0,1]$ for each $e \in E$, 
	and $\mu=\prod_{e\in E} \mu_e$ be a probability measure on $\Omega^\prime$.
	For any $p \in [0,1]$ and $\{ X_e \}_{e \in E} \in \Omega^\prime$, 
	let $\omega_p$ be a configuration defined by
	\[
	\omega_p(e)=\mathbb{1}_{\{X_e <p\} }
	\]
	for any $e \in E$.
	We define a map $f_p$ from $\Omega^\prime$ to $\Omega=\{0,1\}^E$,
	by $f_p(\{X_e\})=\omega_p$.
	Then the pushforward measure of $\mu$ is the same as $\mathbb{P}_p$,
	that is,
	\[
	{f_p}_{\ast}(\mu)=\mathbb{P}_p.
	\]
	By using this equation, we have
	\[
	\mathbb{P}(A)=\mu(\omega_p \in A).
	\]
	Let $p_0 \in [0,p_c]$ be a fixed point.
	For any $p>p_0$ and , we have
	\[
	\mathbb{P}_p(A)-\mathbb{P}_{p_0}(A)
	=\mu( \omega_p \in A, \omega_{p_0} \not\in A).
	\]  
	Hence, by taking the limit, we have
	\[
	\lim_{p \downarrow p_0} \left( \mathbb{P}_p(A)-\mathbb{P}_{p_0}(A) \right)
	=\mu(\forall p>p_0, \omega_p \in A, \omega_{p_0} \not\in A).
	\]
	Let $A=(K\leftrightarrow L)$,
	we consider $(\omega_{p_0} \not\in A)$ occures.
	By Theorem \ref{thm:Cheeger},
	there exists no infinite path from $x$ on $\omega_{p_0}$ almost surely 
	for any $x \in K$ and any $p_0 \in [0,p_c]$.
	Hence, connected components containing elements in $K$ are finite.
	Let $H$ be a finite subgraph which contains all of these connected components.
	If $\omega_p \in A$ holds,
	then there exists at least one edge $e$ on $H$ such that 
	$\omega_{p_0}(e)=0$ and $\omega_p(e)=1$.
	If $\omega_p \in A$ holds for all $p>p_0$,
	then there exists at least one edge $e$ on $H$ such that $X_e = p_0$.
	Hence, we have
	\[
	\lim_{p \downarrow p_0} \left( \mathbb{P}_p(A)-\mathbb{P}_{p_0}(A) \right)
	\le \sum_{e \in E(H)} \mu(X_e=p_0) \le 0.
	\]
	It ends the proof.
	\end{proof}
	\section{Another function $\beta(p)$}
	\label{sc:beta}
	It is left to prove the second half of Theorem \ref{thm:conti}.
	We prepare another function $\beta(p)$ to prove it, defined by
	\[
	\beta(p)
	= \lim_{m \to \infty} \mathbb{P}_p(o \leftrightarrow (0,m))^{\frac{1}{m}}
	= \sup_{m \ge 1} \mathbb{P}_p(o \leftrightarrow (0,m))^{\frac{1}{m}}.
	\]
	The existence of the limit is ensured and 
	it can be written as a supremum similar to $\alpha(p)$.
	By using FKG inequality and the homogeneity of $T_d \Box \mathbb{Z}$, we have 
	\[
	\mathbb{P}_p(o \leftrightarrow (2n,0)) \ge 
	\mathbb{P}_p\left((o\leftrightarrow(n,m)) \cap ((n,m)\leftrightarrow (2n,0))\right)
	\ge \mathbb{P}_p(o\leftrightarrow(n,m))^2.
	\]
	Hence, we have
	\[
	 \mathbb{P}_p(o\leftrightarrow(n,m))
	\le \mathbb{P}_p(o \leftrightarrow (2n,0))^{\frac{1}{2}}
	\le \alpha(p)^n
	\]
	for each $(n,m)$.
	Similarly, we have 
	\[
	\mathbb{P}_p(o\leftrightarrow(n,m)) \le \beta(p)^m.
	\]
	For each $n \ge1$, we define $I_n(p)$ by
	\[
	I_n(p)=\sum_{k \in \mathbb{Z}} \mathbb{P}_p(o \leftrightarrow (n,k)).
	\]
	Since $\mathbb{P}_p(o\leftrightarrow(n,m)) \le \beta(p)^m$,
	it is well-defined when $\beta(p)<1$.

	\begin{lem}
	\label{lem:beta<1}
	For any $p <p_u$, we have
	\[
	\beta(p)<1.
	\]
	\end{lem}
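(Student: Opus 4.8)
The plan is to establish exponential decay of the fiber two–point function $a_m:=\mathbb{P}_p\big((o,0)\leftrightarrow(o,m)\big)$. Since $\beta(p)=\sup_m a_m^{1/m}=\lim_m a_m^{1/m}$, it is enough to find $C<\infty$ and $\theta<1$ with $a_m\le C\theta^m$. The value $p_u$ is unavoidable here: for $p>p_u$ the infinite cluster is unique, so FKG gives $a_m\ge\theta(p)^2>0$ for every $m$ and hence $\beta(p)=1$. Thus the statement can only hold below $p_u$, and the single fact available there is Schonmann's bound combined with the first half of Theorem \ref{thm:conti}: as $\alpha$ is strictly increasing on $[0,p_u]$ and $\alpha(p_u)\le 1/\sqrt{d-1}$, we get the strict inequality $\alpha(p)<1/\sqrt{d-1}$ for all $p<p_u$. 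This is the input I would feed into the estimate.

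To bound $a_m$ I would cut the fiber layer by layer. Any open path from $(o,0)$ to $(o,m)$ crosses, for each $0\le k\le m-1$, the set of vertical edges joining layer $k$ to layer $k+1$; splitting a self–avoiding witnessing path at the last upward crossing of each cut produces $m$ open vertical edges, at tree–positions $x_0,\dots,x_{m-1}$, together with $m+1$ same–layer connections, all on edge–disjoint arcs. Applying the extended BK inequality (Lemma \ref{lem:BK+}) and bounding each same–layer connection between tree vertices $x,y$ by the paper's estimate $\mathbb{P}_p(\cdot)\le\alpha(p)^{d_T(x,y)}$, the sum over $x_0,\dots,x_{m-1}$ collapses to an iterated radial kernel on the tree. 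Writing $K(x,y)=\alpha(p)^{d_T(x,y)}$ as an operator on $\ell^2(T_d)$, this yields
\[
a_m\le p^m\,\big(K^{m+1}\big)(o,o)\le p^m\,\|K\|^{m+1},
\]
so that $\beta(p)\le p\,\|K\|$.

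The norm $\|K\|$ is computable by spherical analysis on the $d$–regular tree: from the generating function $\sum_{j\ge0}A_j t^j=(1-t^2)/(1-tA_1+(d-1)t^2)$ of the sphere operators $A_j$, together with the fact that the adjacency operator $A_1$ has $\ell^2$–spectrum $[-2\sqrt{d-1},\,2\sqrt{d-1}]$, one obtains
\[
\|K\|=\frac{1-\alpha(p)^2}{\big(1-\alpha(p)\sqrt{d-1}\big)^2},
\]
which is finite exactly when $\alpha(p)\sqrt{d-1}<1$. By the first paragraph this holds throughout $p<p_u$, so the bound $\beta(p)\le p\,\|K\|$ is at least meaningful in the whole range.

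The main obstacle is that finiteness of $\|K\|$ is not enough: $\|K\|\to\infty$ as $\alpha(p)\uparrow 1/\sqrt{d-1}$, so $p\,\|K\|$ need not be $<1$ near $p_u$, and the crude bound degrades before reaching $p_u$. The hard part is therefore to trade this operator norm for a genuine rate strictly below $1$. I expect to do this by not discarding the vertical cost: each of the $m$ crossings carries a factor $p$ and the net layer displacement is exactly $m$, so the quantity that really controls the rate is the top of the spectrum of a transfer operator assembled from $K$ and the simple $\pm1$ layer–walk rather than $\|K\|$ itself, and the square–root vanishing of the Kesten spectral density at the edge $2\sqrt{d-1}$ should keep this rate away from $1$ precisely when $\alpha(p)<1/\sqrt{d-1}$ strictly. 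A more structural alternative is the contrapositive — that $\beta(p)=1$ would force the fiber connections not to decay and, through insertion tolerance and cluster indistinguishability, force a single infinite cluster, whence $p\ge p_u$ — which also explains why the correct threshold is exactly $p_u$ and not the smaller value produced by the naive norm bound. Making either of these precise is the technical core of the argument.
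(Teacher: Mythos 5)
Your kernel bound is fine as far as it goes: the segments between consecutive last upward crossings are half-space rather than same-layer connections, but since $\mathbb{P}_p\bigl((x,j)\leftrightarrow (y,j')\bigr)\le\alpha(p)^{d_T(x,y)}$ holds for any pair of layers, the estimate $a_m\le p^m\bigl(K^{m+1}\bigr)(o,o)\le p^m\|K\|^{m+1}$ and hence $\beta(p)\le p\,\|K\|$ is correct, and your formula for $\|K\|$ is the right spherical computation. The problem is exactly the one you name and then do not resolve: $\|K\|\to\infty$ as $\alpha(p)\uparrow 1/\sqrt{d-1}$, so $p\,\|K\|<1$ only on a strictly smaller interval than $[0,p_u)$, and both of your proposed repairs are left as sketches. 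The transfer-operator idea is a plausible direction but is not carried out, and the contrapositive route is circular in spirit: subexponential decay of the fiber two-point function ($\beta(p)=1$) does not obviously force a unique infinite cluster --- indeed at $p=p_u$ there are a.s.\ infinitely many infinite clusters on this graph --- so "$\beta(p)=1\Rightarrow p\ge p_u$" is essentially the lemma itself. The proposal therefore has a genuine gap at its announced technical core.

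The paper closes this gap by a mechanism that an operator-norm bound cannot reach, following Liggett. It introduces the horocycle level function $L$ on $T_d$ with the cocycle identity $L(x)=L(y)+L_y(x)$ (Lemma \ref{lem:level}) and the weighted sums $J_m(p,z)=\sum_{x}\mathbb{P}_p(o\leftrightarrow(x,m))\,z^{L(x)}$, which are finite for $z\in(\alpha(p),1/(\alpha(p)(d-1)))$ --- an interval that is nonempty precisely when $\alpha(p)<1/\sqrt{d-1}$, i.e.\ for all $p<p_u$, which is the same input as your first paragraph. The extended BK inequality (Lemma \ref{lem:BK+}) together with the cocycle identity gives submultiplicativity $J_{m+l}\le J_m J_l$, whence $\beta(p)\le\inf_m J_m(p,z)^{1/m}$; so one never needs any norm to be below $1$, only \emph{well-definedness} of $J_m$ plus $J_m(p,z)<1$ for a \emph{single} $m$. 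That smallness (in fact $\inf_m J_m=0$) is supplied by a qualitative ingredient your argument never uses: Lemma \ref{lem:beta.inf}, that $\inf_m\mathbb{P}_p(o\leftrightarrow(0,m))=0$, proved by cutting at $\partial B(n)\Box\mathbb{Z}$, using $p_c(B(n)\Box\mathbb{Z})=1$ inside and the bound $d(d-1)^{n-1}\alpha(p)^{2n}\to0$ (Schonmann's theorem with strict monotonicity again) outside, combined with the weight-comparison $a_n(z)/a_n(z_0)\to0$. Morally, the weight $z^{L(x)}$ is the positive-eigenfunction conjugation your transfer-operator paragraph gestures at, and submultiplicativity-plus-$\inf=0$ is the device that replaces the spectral estimate you could not push to $p_u$. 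To complete your draft you would need both missing ingredients: the weighted submultiplicative quantity in place of $\|K\|$, and the vanishing-infimum lemma that drives it below $1$.
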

	This lemma is shown in the next section.
	We assume Lemma \ref{lem:beta<1} holds,
	and only consider when $p<p_u$.
	
	\begin{lem}
	\label{lem:I_n}
	For any $n,l \ge 1$, we have
	\[
	I_{n+l}(p) \le I_n(p) I_l(p).
	\]
	\end{lem}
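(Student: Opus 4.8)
The plan is to prove the submultiplicativity $I_{n+l}(p) \le I_n(p) I_l(p)$ by a decomposition argument over the intermediate layer at tree-distance $n$, combined with the BK-type inequality for connection events established in Lemma \ref{lem:BK+}. The key observation is that if $o$ is connected to a vertex $(n+l,k)$ in the layer at tree-distance $n+l$, then the open path must pass through some vertex in the layer at tree-distance $n$; more precisely, on the tree $T_d$ the geodesic from the root to the endpoint $v_{n+l}$ passes through a unique vertex $v_n$ at distance $n$, so any open path realizing the connection visits the fiber $\pi^{-1}(v_n)$ at some vertex $(n,j)$.

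First I would fix the target vertex $(n+l, k)$ and write the event $(o \leftrightarrow (n+l,k))$ as a union, over the intermediate vertex $(n,j)$ through which the path crosses the layer at tree-distance $n$, of the event $(o \leftrightarrow (n,j)) \circ ((n,j) \leftrightarrow (n+l,k))$ occurring on disjoint edge sets. The disjointness is the crucial structural point: a path from $o$ to $(n+l,k)$ can be split at its \emph{last} visit to the layer $\pi^{-1}(v_n)$, giving a first segment from $o$ to $(n,j)$ and a second disjoint segment from $(n,j)$ to $(n+l,k)$ that does not return to that layer. Applying Lemma \ref{lem:BK+} to each term yields
\[
\mathbb{P}_p(o \leftrightarrow (n+l,k))
\le \sum_{j \in \mathbb{Z}} \mathbb{P}_p(o \leftrightarrow (n,j)) \, \mathbb{P}_p((n,j) \leftrightarrow (n+l,k)).
\]

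Next I would sum over all $k \in \mathbb{Z}$ and use the homogeneity of $T_d \Box \mathbb{Z}$ under the $\mathbb{Z}$-translation together with the tree automorphism sending the subtree below $v_n$ onto a copy rooted at the origin. Translating by $(-n,-j)$, the second factor becomes $\mathbb{P}_p(o \leftrightarrow (l, k-j))$, so that
\[
\sum_{k \in \mathbb{Z}} \mathbb{P}_p((n,j) \leftrightarrow (n+l,k))
= \sum_{k \in \mathbb{Z}} \mathbb{P}_p(o \leftrightarrow (l,k-j)) = I_l(p),
\]
which is independent of $j$. Substituting and interchanging the two (absolutely convergent, since $\beta(p)<1$ by Lemma \ref{lem:beta<1}) sums gives
\[
I_{n+l}(p) = \sum_{k \in \mathbb{Z}} \mathbb{P}_p(o \leftrightarrow (n+l,k))
\le \sum_{j \in \mathbb{Z}} \mathbb{P}_p(o \leftrightarrow (n,j)) \, I_l(p)
= I_n(p) \, I_l(p).
\]

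The main obstacle I anticipate is justifying the disjoint-occurrence decomposition rigorously, namely that the splitting at the last crossing of the layer $\pi^{-1}(v_n)$ genuinely produces events $(o \leftrightarrow (n,j))$ and $((n,j) \leftrightarrow (n+l,k))$ realized on \emph{edge-disjoint} witnesses so that $\circ$ applies. One must be careful that the decomposition covers the connection event (an inclusion of the form $(o \leftrightarrow (n+l,k)) \subseteq \bigcup_j (o \leftrightarrow (n,j)) \circ ((n,j) \leftrightarrow (n+l,k))$ suffices, since we only need an upper bound), and that the homogeneity identification of the second factor with a translate is legitimate given the tree structure. The absolute convergence needed to interchange the sums is guaranteed by the bound $\mathbb{P}_p(o \leftrightarrow (n,k)) \le \beta(p)^{|k|}$ together with $\beta(p)<1$, so the double series is dominated and Tonelli applies.
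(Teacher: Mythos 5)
Your proposal is correct and follows essentially the same route as the paper: the paper likewise decomposes $(o \leftrightarrow (n+l,k))$ over the intermediate fiber $\pi^{-1}(v_n)$ as $\bigcup_{t}(o \leftrightarrow (n,t)) \circ ((n,t) \leftrightarrow (n+l,k))$, applies Lemma \ref{lem:BK+}, identifies the second factor with $\mathbb{P}_p(o \leftrightarrow (l,k-t))$ by homogeneity, and sums the double series to get $I_n(p)I_l(p)$. Your additional care about the edge-disjoint splitting of a self-avoiding path, the sufficiency of the inclusion, and the Tonelli interchange merely makes explicit what the paper leaves implicit.
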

	
	\begin{proof}
	By using Lemma \ref{lem:BK+}, we have
	\begin{align*}
	I_{n+l}(p)
	&=\sum_{k \in \mathbb{Z}} \mathbb{P}_p(o \leftrightarrow (n+l,k))
	= \sum_{k \in \mathbb{Z}} \mathbb{P}_p \left( \bigcup_{t \in \mathbb{Z}}
	(o \leftrightarrow (n,t)) \circ ((n,t) \leftrightarrow (n+l,k)) \right)\\
	&\le \sum_{k \in \mathbb{Z}} \sum_{t \in \mathbb{Z}} 
	\mathbb{P}_p(o \leftrightarrow (n,t)) \mathbb{P}_p(o \leftrightarrow (l,k-t))
	\le I_n(p) I_l(p).
	\end{align*}
	\end{proof}

	Therefore, we define a function $\eta(p)$ by
	\[
	\eta(p)=\lim_{n \to \infty} I_n(p)^{\frac{1}{n}}
	= \inf_{n \ge 1} I_n(p)^{\frac{1}{n}}.
	\]
	
	\begin{lem}
	The function $\eta(p)$ is right-continuous on $[0,p_c]$.
	\end{lem}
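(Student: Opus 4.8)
The plan is to deduce the right-continuity of $\eta(p)$ from the continuity of the individual functions $I_n(p)$ on $[0,p_c]$ together with the monotonicity of $\eta$. First I would record the two structural facts I need. Since a connection event is increasing, each $\mathbb{P}_p(o\leftrightarrow(n,k))$ is non-decreasing in $p$; hence so is $I_n(p)$, and therefore $\eta(p)=\inf_{n\ge1}I_n(p)^{1/n}$ is non-decreasing on $[0,p_c]$. On the other hand, by Hutchcroft's theorem $p_c<p_u$, so Lemma \ref{lem:beta<1} gives $\beta(p_c)<1$; since $\beta$ is a supremum of non-decreasing functions it is itself non-decreasing, whence $\beta_0:=\sup_{p\in[0,p_c]}\beta(p)=\beta(p_c)<1$.

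The key step is to show that each $I_n(p)$ is continuous on $[0,p_c]$. Using the bound $\mathbb{P}_p(o\leftrightarrow(n,k))\le\beta(p)^{|k|}$ established in this section (and the reflection symmetry in the $\mathbb{Z}$-coordinate coming from the homogeneity of $T_d\Box\mathbb{Z}$), for every $p\in[0,p_c]$ and every $M\ge1$ we have the uniform tail estimate $\sum_{|k|>M}\mathbb{P}_p(o\leftrightarrow(n,k))\le 2\beta_0^{M+1}/(1-\beta_0)$, which tends to $0$ as $M\to\infty$ independently of $p$ and $n$. Thus $I_n(p)$ is the uniform limit on $[0,p_c]$ of the partial sums $\sum_{|k|\le M}\mathbb{P}_p(o\leftrightarrow(n,k))$. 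Each partial sum is a finite sum of connection-event probabilities, which are continuous on $[0,p_c]$ by Lemma \ref{lem:conn.conti}; hence $I_n(p)$ is continuous on $[0,p_c]$, and so is $f_n(p):=I_n(p)^{1/n}$.

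Finally I would combine these by a direct $\epsilon$-argument. Fix $p_0\in[0,p_c]$ and $\epsilon>0$, and choose $n$ with $f_n(p_0)<\eta(p_0)+\epsilon$; by continuity of $f_n$ there is $\delta>0$ with $f_n(p)<\eta(p_0)+\epsilon$ whenever $p_0\le p<p_0+\delta$. For such $p$ we get $\eta(p)\le f_n(p)<\eta(p_0)+\epsilon$, while monotonicity gives $\eta(p)\ge\eta(p_0)$. Hence $0\le\eta(p)-\eta(p_0)<\epsilon$ for all $p\in[p_0,p_0+\delta)$, which is exactly right-continuity at $p_0$. (Equivalently: $\eta$, being an infimum of continuous functions, is upper semi-continuous, and a non-decreasing upper semi-continuous function is right-continuous.)

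The main obstacle is the continuity of $I_n$: the sum defining $I_n(p)$ is infinite, so pointwise continuity of the summands from Lemma \ref{lem:conn.conti} is not by itself enough, and the argument hinges on the uniform exponential tail bound. That uniformity rests in turn on $\beta$ being bounded away from $1$ on the whole interval $[0,p_c]$, i.e. on the strict inequality $p_c<p_u$. Once the uniform tail control is in place, the passage from continuity of each $I_n$ to right-continuity of the infimum $\eta$ is the routine semicontinuity-plus-monotonicity step above.
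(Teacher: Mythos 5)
Your proposal is correct and follows essentially the same route as the paper: both establish continuity of each $I_n$ on $[0,p_c]$ by uniform convergence of the truncated sums $I_n^{(m)}$, using Lemma \ref{lem:conn.conti} for the summands and the uniform tail bound $2\beta(p_c)^{m+1}/(1-\beta(p_c))$ coming from $\beta(p_c)<1$ (i.e. $p_c<p_u$), and then pass to the infimum. Your write-up is in fact slightly more careful than the paper's at the final step, where the paper asserts that an infimum of continuous functions is right-continuous, while you correctly note that this infimum is only upper semi-continuous and supply the missing ingredient --- monotonicity of $\eta$ --- to conclude right-continuity.
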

	
	\begin{proof}
	If $I_n(p)$ is a continuous on $[0,p_c]$ for any $n \ge1$,
	then $\eta(p)$ is the infimum of a right-continuous of $p$.
	Therefore, $\eta(p)$ is a right-continuous on $[0,p_c]$.
	We define $I_n^{(m)}(p)$ by
	\[
	I_n^{(m)}(p)=\sum_{|k| \le m} \mathbb{P}_p(o \leftrightarrow (n,k)).
	\]
	By Lemma \ref{lem:conn.conti},
	$I_n^{(m)}(p)$ is continuous on $[0,p_c]$,
	and we have
	\begin{align*}
	I_n(p)-I_n^{(m)}(p)
	&= \sum_{|k|>m} \mathbb{P}_p(o \leftrightarrow (n,k))
	\le 2 \sum_{k>m} \beta(p)^k\\
	&=2 \frac{\beta(p)^{m+1}}{1-\beta(p)}
	\le 2 \frac{\beta(p_c)^{m+1}}{1-\beta(p_c)}
	\end{align*}
	for all $p \in [0,p_c]$. Therefore, 
	$\{I_n^{(m)}(p)\}_{m \ge 0} $ uniformly converges to $I_n(p)$ on $[0,p_c]$.
	Hence $I_n(p)$ is continuous on $[0,p_c]$.
	\end{proof}
	We will prove the second half of Theorem \ref{thm:conti}.
	Now we know that $\alpha(p)$ is left-continuous on $[0,1]$
	and $\eta(p)$ is right-continuous on $[0,p_c]$.
	\begin{lem}
	For any $p \in [0,p_u)$, we have $\alpha(p)=\eta(p)$. 
	In particular, $\alpha(p)$ is continuous on $[0,p_c]$.
	\end{lem}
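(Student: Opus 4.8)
The plan is to establish the two inequalities $\alpha(p) \le \eta(p)$ and $\eta(p) \le \alpha(p)$ separately, and then to read off the continuity assertion from the one-sided continuity properties already in hand. Throughout I fix $p \in (0,p_u)$ (the case $p=0$ is trivial, since $I_n(0)=0$ for every $n \ge 1$, whence $\eta(0)=0=\alpha(0)$), and abbreviate $g(n,k)=\mathbb{P}_p(o \leftrightarrow (n,k))$. Recall that $\beta(p)<1$ by Lemma~\ref{lem:beta<1} and $\alpha(p) \le \alpha(p_u)<1$ by Theorem~\ref{thm:Schonmann} and monotonicity, so both quantities lie in $(0,1)$; and recall from the inequalities derived at the start of this section that $g(n,k) \le \alpha(p)^n$ and, by the reflection symmetry $k \mapsto -k$, that $g(n,k) \le \beta(p)^{|k|}$ for every $(n,k)$. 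The inequality $\alpha(p) \le \eta(p)$ is then immediate: the term $g(n,0)$ is one of the nonnegative summands of $I_n(p)$, so $g(n,0) \le I_n(p)$, and taking $n$-th roots and letting $n \to \infty$ gives $\alpha(p) \le \eta(p)$.

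For the reverse inequality I would split the sum $I_n(p)=\sum_{k \in \mathbb{Z}} g(n,k)$ at a linear cutoff $|k| = Cn$. Choose the constant $C=C(p)$ so that $\beta(p)^{C} \le \alpha(p)$, which is possible precisely because $0<\beta(p),\alpha(p)<1$ (e.g.\ $C=\log\alpha(p)/\log\beta(p)$). For the central terms $|k| \le Cn$ I bound each by $\alpha(p)^n$, and there are at most $2Cn+1$ of them; for the tail terms $|k|>Cn$ I bound $g(n,k) \le \beta(p)^{|k|}$ and sum the resulting geometric series, using $\beta(p)^{Cn} \le \alpha(p)^n$ to convert the tail bound into a constant multiple of $\alpha(p)^n$. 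This yields an estimate of the shape
\[
I_n(p) \le \left( 2Cn+1 + \frac{2\beta(p)}{1-\beta(p)} \right) \alpha(p)^n .
\]
Taking $n$-th roots, the prefactor grows only polynomially in $n$ and hence tends to $1$, so $\eta(p)=\lim_{n\to\infty} I_n(p)^{1/n} \le \alpha(p)$. Combining the two inequalities proves $\alpha(p)=\eta(p)$ on $[0,p_u)$. The step I expect to be the real obstacle is exactly this tail control: the sum over $k$ is infinite, so to keep it from inflating the $n$-th-root limit one must balance the polynomial count of central layers against a genuinely geometric vertical tail. It is here, and only here, that $\beta(p)<1$ (valid for $p<p_u$) is indispensable, which is also why the identity cannot be expected beyond $p_u$.

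For the continuity statement I would invoke $p_c<p_u$ (established earlier for all $d \ge 3$), so that $[0,p_c] \subset [0,p_u)$ and the identity $\alpha=\eta$ holds throughout $[0,p_c]$, and in particular on a right neighbourhood of every point of $[0,p_c)$. Since $\alpha(p)$ is left-continuous on $[0,1]$ and, on $[0,p_c)$, agrees with the right-continuous function $\eta(p)$, it follows that $\alpha$ is also right-continuous at every point of $[0,p_c)$; together with left-continuity at $p_c$ this gives that $\alpha$ is continuous on the whole of $[0,p_c]$, completing the proof.
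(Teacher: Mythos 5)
Your proof is correct, and your route to the key inequality $\eta(p)\le\alpha(p)$ is genuinely different from the paper's (the easy direction $\alpha(p)\le\eta(p)$ via the $k=0$ term is the same in both). The paper truncates the vertical sum at a \emph{fixed} height $m$, setting $I_n^{(m)}(p)=\sum_{|k|\le m}\mathbb{P}_p(o\leftrightarrow(n,k))$ and $\eta_m(p)=\liminf_{n\to\infty}I_n^{(m)}(p)^{1/n}$, argues that $\lim_{m\to\infty}\eta_m(p)=\eta(p)$, and then concludes from $I_n^{(m)}(p)\le(2m+1)\alpha(p)^n$ that $\eta(p)-\epsilon\le(2m+1)^{1/n}\alpha(p)\to\alpha(p)$. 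In that scheme the bound $\beta(p)<1$ enters only to make $I_n(p)$ finite, and the delicate point is precisely the interchange of the limits in $m$ and $n$ behind $\eta=\lim_m\eta_m$: indeed $\eta_m(p)\le\alpha(p)$ for every $m$ by the same $(2m+1)\alpha(p)^n$ bound, so that intermediate identity is essentially equivalent to the inequality being proved, and the paper's write-up of it (choosing $n$ depending on $m$ and conversely) is the softest step of the whole proof. You avoid this double limit entirely with the $n$-dependent cutoff $|k|\le Cn$, with $C$ calibrated so that $\beta(p)^C\le\alpha(p)$, which converts all of $I_n(p)$ into $\left(2Cn+1+\tfrac{2\beta(p)}{1-\beta(p)}\right)\alpha(p)^n$ in one stroke; the $n$-th root then kills the polynomial prefactor. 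The price is a heavier reliance on the quantitative vertical decay $\mathbb{P}_p(o\leftrightarrow(n,k))\le\beta(p)^{|k|}$, but the paper establishes exactly this at the start of Section \ref{sc:beta}, and your side remarks (the trivial case $p=0$, and $\alpha(p),\beta(p)\in(0,1)$ for $p\in(0,p_u)$ so that $C$ is finite and positive) cover the needed edge cases. Your continuity step is the same as the paper's --- left-continuity of $\alpha$ everywhere, right-continuity of $\eta$ on $[0,p_c]$ from Lemma \ref{lem:conn.conti}, and $\alpha=\eta$ on $[0,p_c]\subset[0,p_u)$ --- with your endpoint bookkeeping at $p_c$ if anything slightly more careful than the paper's.
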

	
	\begin{proof}
	We define $\eta_m(p)$ by
	\[	
	\eta_m(p)= \liminf_{n \to \infty} I_n^{(m)}(p)^{\frac{1}{n}}.
	\]
	For all $m \ge 0, n \ge 1, p \in [0, p_u)$, it is clear taht
	$I_n^{(m)}(p)^{1/n} \le I_n(p)^{1/n}$ holds.
	Then we have $\eta_m(p) \le \eta(p)$.
	Hence we have $\displaystyle \lim_{m \to \infty} \eta_m(p) \le \eta(p)$.
	First, we show that $\displaystyle \lim_{m \to \infty} \eta_m(p) = \eta(p)$.
	By the definition of $\eta_m(p)$,
	for any $\epsilon>0, m \ge 0$, there exists an $n \ge 1$ such that
	\[
	I_n^{(m)}(p)^{\frac{1}{n}}-\epsilon \le \eta_m(p).
	\] 
	By the definition of $\eta(p)$, for all $n\ge 1$, we have 
	\begin{align*}
	\eta(p)^n
	& \le I_n(p)
	= \lim_{m \to \infty} \sum_{|k| \le m} \mathbb{P}_p(o \leftrightarrow (n,k))
	= \lim_{m \to \infty} \left( I_n^{(m)}(p)^{\frac{1}{n}} \right)^n,\\
	\eta(p) &\le \lim_{m \to \infty} I_n^{(m)}(p)^{\frac{1}{n}}.
	\end{align*}
	Therefore we have 
	\[
	\eta(p)-\epsilon
	\le \lim_{m \to \infty} \eta_m(p)
	\]
	for any $\epsilon>0$.
	it completes the proof of 
	$\displaystyle \lim_{m \to \infty} \eta_m(p) = \eta(p)$.
	Next, for all $n \ge 1$, it is clear that 
	$\mathbb{P}_p(o \leftrightarrow (n,0))^{1/n} \le I_n(p)^{1/n}$.
	Then we have $\alpha(p) \le \eta(p)$.
	For any $\epsilon>0$, there exists $m$ such that 
	\[
	\eta(p)-\frac{\epsilon}{2} \le \eta_m(p).
	\]
	By the definition of $\eta_m(p)$,
	there exisits $N \ge 1$ such that 
	\[
	\eta_m(p) - \frac{\epsilon}{2} \le I_n^{(m)}(p)^{\frac{1}{n}}
	\]
	for all $n \ge N$.
	By the inequality $\mathbb{P}_p( o\leftrightarrow (n,k)) \le \alpha(p)^n$,
	we have
	\[
	I_n^{(m)}(p) \le (2m+1) \alpha(p)^n.
	\]
	Therefore, from three above inequalty, we have
	\[	
	\eta(p)-\epsilon \le (2m+1)^{\frac{1}{n}} \alpha(p)
	\]
	for any $\epsilon>0$.
	The right-hand side goes to $\alpha(p)$ as $n \to \infty$.
	It completes the proof.
	\end{proof}
	Then we showed Theorem \ref{thm:conti} 
	if Lemma \ref{lem:beta<1} holds.

	\section{Proof of Lemma \ref{lem:beta<1}}
	\label{sc:pf.beta<1}
	In this section, we prove Lemma \ref{lem:beta<1}.
	Our method is based on \cite{Liggett},
	this paper is about contact process,
	we aplly it to percolation process.
	\begin{lem}
	\label{lem:beta.inf}
	For any $p \in [0, p_u)$, we have
	\[	
	\inf_{m \ge 0} \mathbb{P}_p(o \leftrightarrow (0,m))=0.
	\]
	\end{lem}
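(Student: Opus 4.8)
The plan is to argue by contradiction. Suppose $\inf_{m\ge0}\mathbb{P}_p(o\leftrightarrow(0,m))=\ell>0$ for some $p<p_u$, and derive that the vertical connection probabilities must in fact decay exponentially, contradicting $\ell>0$. The single structural input I would rely on is that $p<p_u$ forces $(d-1)\alpha(p)^2<1$: the first half of Theorem \ref{thm:conti} (strict monotonicity of $\alpha$ on $[0,p_u]$), already established in the excerpt, gives $\alpha(p)<\alpha(p_u)$, and Schonmann's Theorem \ref{thm:Schonmann} gives $\alpha(p_u)\le 1/\sqrt{d-1}$. This sharper bound, rather than the mere inequality $\alpha(p)<1$, is what makes the tree sums below summable, so it is exactly here that the value $p_u$ enters.

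First I would record the soft consequences of the assumption. Writing $a_m=\mathbb{P}_p(o\leftrightarrow(0,m))$ and $T=\sup\{m\ge0:o\leftrightarrow(0,m)\}$, the events $\{T\ge m\}$ decrease to $\{T=\infty\}$ and contain $\{o\leftrightarrow(0,m)\}$, so $\mathbb{P}_p(T=\infty)\ge\ell$. Since $o\leftrightarrow(0,m)$ forces the cluster of $o$ to contain a vertex at graph-distance $m$, the finite-cluster contribution obeys $\mathbb{P}_p(o\leftrightarrow(0,m),\,|C(o)|<\infty)\le\mathbb{P}_p(m+1\le|C(o)|<\infty)\to0$; hence $\mathbb{P}_p(o\leftrightarrow\infty)\ge\limsup_m a_m\ge\ell>0$, so $p\ge p_c$ and there is a.s. an infinite cluster, and for all large $m$ the points $o$ and $(0,m)$ lie in a \emph{common} infinite cluster with probability at least $\ell/2$. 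By reflection of the line together with the FKG inequality, the cluster of $o$ is then unbounded in both directions along the root fibre with probability at least $\ell^2$. Thus the assumption amounts to ``local survival along the fibre'' holding with positive probability in the non-uniqueness regime.

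The core step is to refute this by controlling the horizontal spread through the slab decomposition. Crossing the horizontal slab at height $j$ and applying Lemma \ref{lem:BK+} together with the homogeneity of $T_d\Box\mathbb{Z}$ gives, with $g_k(m):=\mathbb{P}_p(o\leftrightarrow(v_k,m))$,
\[
a_{m+l}\le\sum_{k\ge0}d(d-1)^{k-1}\,g_k(m)\,g_k(l),\qquad g_k(m)\le\alpha(p)^k .
\]
Read on $\ell^2(T_d)$, this says that $m\mapsto g_\bullet(m)$ is sub-multiplicative under convolution on the tree, and the radial kernel $k\mapsto d(d-1)^{k-1}\alpha(p)^{k}$ is square-summable against the tree's spherical function precisely because $(d-1)\alpha(p)^2<1$. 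Iterating the decomposition therefore realises the fibre two-point function through a positive convolution operator on the tree whose norm is governed by $(d-1)\alpha(p)^2$, and I would use this $\ell^2$-boundedness to show that a cluster which meets the fibre at arbitrarily large heights must be ``horizontally thin'' in a way incompatible with the uniform lower bound $a_m\ge\ell$, forcing $a_m\to0$.

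The hard part is exactly this last conversion: turning the quantitative summability $(d-1)\alpha(p)^2<1$ into genuine exponential decay of $a_m$ (equivalently, into the merging that would otherwise yield a unique infinite cluster and $p\ge p_u$). The difficulty is that a crude count of same-cluster pairs along one fibre is consistent with non-uniqueness, so the argument cannot be an amenability-style cluster count; it must exploit the branching of $T_d$ through the spectral threshold $1/\sqrt{d-1}$, following Liggett's treatment of the contact process. Making the operator-norm estimate on the tree and the resulting decay rigorous is where the real work lies, and it is the reason Schonmann's bound, not merely $\alpha(p)<1$, is indispensable.
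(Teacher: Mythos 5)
There is a genuine gap, and it sits exactly where you flagged ``the real work.'' Your identification of the quantitative input is correct and matches the paper: strict monotonicity of $\alpha$ on $[0,p_u]$ plus Theorem \ref{thm:Schonmann} give $\alpha(p)<1/\sqrt{d-1}$, i.e.\ $(d-1)\alpha(p)^2<1$, for $p<p_u$. Your layer decomposition via Lemma \ref{lem:BK+}, $a_{m+l}\le\sum_{k\ge0}d(d-1)^{k-1}g_k(m)\,g_k(l)$ with $g_k(m)\le\alpha(p)^k$, is also a legitimate inequality. But it cannot carry the conclusion: since your only control on $g_k(m)$ is uniform in $m$, the decomposition yields $a_{m+l}\le\sum_{k\ge0}d(d-1)^{k-1}\alpha(p)^{2k}=C(p)<\infty$, a bound constant in $m$ and perfectly consistent with $a_m\equiv\ell>0$. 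Summability of the radial kernel produces no decay in the fibre variable; the sketched $\ell^2$-operator-norm step would bound $a_{2m}$ by a weighted norm of $g_\bullet(m)$ that your estimates again control only by a constant, so the contradiction never materialises. Worse, invoking Liggett's spectral machinery here risks circularity with the paper's own architecture: the generating functions $J_m(p,z)$ and the computation of $a_n(z)$ appear in Section \ref{sc:pf.beta<1} \emph{after} Lemma \ref{lem:beta.inf}, and the final lemma there ($\inf_m J_m(p,z)=0$) uses precisely $\inf_m\mathbb{P}_p(o\leftrightarrow(n,m))=0$ to dispose of the finitely many small-$n$ terms.

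The missing idea is a decomposition in the tree direction rather than at heights, with a subgraph whose critical probability is $1$. Fix the tube $G_{n,\bullet}=B(n)\Box\mathbb{Z}$. Since $p_c(G_{n,\bullet})=1$, for every $p<1$ one has $\inf_{m\ge0}\mathbb{P}_p^{G_{n,\bullet}}(o\leftrightarrow(0,m))=0$: confined to the tube, vertical connection probabilities genuinely die. Any connection from $o$ to $(0,m)$ not contained in the tube forces, for some $x\in\partial B(n)$, the events $(o\leftrightarrow\pi^{-1}(x))$ and $((0,m)\leftrightarrow\pi^{-1}(x))$ to occur on disjoint edge sets, so Lemma \ref{lem:BK+} together with Lemma \ref{lem:alpha^prime} (identifying the cylinder rate $\alpha^\prime$ with $\alpha$) gives
\[
\mathbb{P}_p(o\leftrightarrow(0,m))
\le\mathbb{P}_p^{G_{n,\bullet}}(o\leftrightarrow(0,m))
+d(d-1)^{n-1}\alpha(p)^{2n}.
\]
Taking $\inf_{m\ge0}$ at fixed $n$ annihilates the first term, and the bound $(d-1)\alpha(p)^2<1$ — the very input you isolated — sends the second to $0$ as $n\to\infty$. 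No contradiction scheme, no exponential decay of $a_m$, and no spectral estimate is needed; note also that the lemma asserts only $\inf_m a_m=0$, strictly weaker than the decay your proposal set out to establish.
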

	
	\begin{proof}
	We recall $G_{n, \bullet}$ is a subgraph defined by 
	$G_{n, \bullet}=B(n) \Box \mathbb{Z}$
	where $B(n)$ is an $n$-ball whose center is the origin.
	Since $p_c(G_{n, \bullet})=1$,
	we have
	\[
	\inf_{m \ge 0} \mathbb{P}_p^{G_{n, \bullet}}(o \leftrightarrow (0,m))=0
	\]
	for any $p<1$.
	If $(o \leftrightarrow (0,m))$ occurs on $G$, 
	then at least one of events $(o \leftrightarrow (0,m))$ on $G_{n, \bullet}$ or 
	there exists an open path between $o$ and $(0,m)$ 
	which is not contained $G_{n, \bullet}$ occurs.
	If the latter occurs,
	there exists $x \in \partial B(n)$ such that 
	$(o \leftrightarrow \pi^{-1}(x))$ and $(o \leftrightarrow \pi^{-1}(x))$ 
	occur on disjoint edge subsets.
	Then we have
	\[
	\mathbb{P}_p(o \leftrightarrow (0,m))
	\le \mathbb{P}_p^{G_{n, \bullet}}(o \leftrightarrow (0,m))
	+\mathbb{P}_p \left( \bigcup_{x \in \partial B(n)} 
	(o \leftrightarrow \pi^{-1}(x)) \circ ((0,m) \leftrightarrow \pi^{-1}(x)) \right)
	\]
	for all $n \ge 1$.
	By Lemma \ref{lem:alpha^prime} and Lemma \ref{lem:BK+}, we have
	\begin{align*}
	\mathbb{P}_p \left( \bigcup_{x \in \partial B(n)} 
	(o \leftrightarrow \pi^{-1}(x)) \circ (o \leftrightarrow \pi^{-1}(x)) \right)
	&\le \sum_{x \in \partial B(n)} \mathbb{P}_p(o \leftrightarrow \pi^{-1}(x)) 
	\mathbb{P}_p((0,m) \leftrightarrow \pi^{-1}(x))\\
	&\le d(d-1)^{n-1} \alpha(p)^{2n}.
	\end{align*}
	By Theorem \ref{thm:Schonmann} and by the fact that 
	$\alpha(p)$ is strictly increasing on $[0,p_u]$, we have
	\[
	\alpha(p) < \frac{1}{\sqrt{d-1}}
	\]
	for all $p \in [0,p_u)$.
	Therefore, we have
	\[
	\inf_{m \ge 0} \mathbb{P}_p(o \leftrightarrow (0,m))
	\le d(d-1)^{n-1} \alpha(p)^{2n}
	\rightarrow 0
	\]
	as $n \to \infty$.
	It ends the proof.
	\end{proof}
	
	We define the level function $L(x)$ from $T_d$ to $\mathbb{Z}$.
	Let $\gamma$ be an infinite geodesic from the origin on $T_d$.
	First, we define $L(x)=-|x|=-d(o, x)$ when $x \in \gamma$.
	Next, when $x \not\in \gamma$,
	there exists only one vertex $x^\prime \in \gamma$
	such that $d(x, \gamma)=d(x,x^\prime)$.
	Then we define $L(x)=L(x^\prime)+d(x,x^\prime)$.
	This level function $L(x)$ is based on the origin.
	Let $\gamma_y$ be a unique infinite geodesic from $y$ 
	such that $|\gamma \cap \gamma_y|=\infty$.
	We define $L_y (x)$ in the same way by replacing the origin with $y$
	and $\gamma$ with $\gamma_y$.
	
	\begin{lem}
	\label{lem:level}
	For any $x,y \in T_d$, we have
	\[
	L(x)=L(y)+L_y(x).
	\]
	\end{lem}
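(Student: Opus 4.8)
The plan is to reinterpret the level function $L$ as a Busemann-type function attached to the end of $T_d$ determined by $\gamma$, and to read off the claimed identity from the additive ``cocycle'' behaviour of such functions. Concretely, I would first establish a stabilisation formula: for every $x \in T_d$, if $x'$ denotes the projection of $x$ onto $\gamma$ (the unique closest point of $x$ on $\gamma$, say $x' = \gamma_j$), and if $w = \gamma_t$ is any vertex of $\gamma$ with $t \ge j$, then
\[
L(x) = d(x,w) - d(o,w).
\]
This follows from the definition of $L$ together with the tree projection identity $d(x,w) = d(x,x') + d(x',w)$, valid for every $w \in \gamma$: for $t \ge j$ one has $d(x',w) = t - j$, so $d(x,w) - d(o,w) = d(x,x') + (t-j) - t = d(x,x') - j = L(x)$, where the last step is exactly the defining relation $L(x) = L(x') + d(x,x')$ with $L(x') = -j$. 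Running the same computation with $o,\gamma$ replaced by $y,\gamma_y$ gives $L_y(x) = d(x,w) - d(y,w)$ for every $w$ lying on $\gamma_y$ beyond the projection of $x$ onto $\gamma_y$.

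Next I would exploit the common tail of $\gamma$ and $\gamma_y$. Since $\gamma_y$ is the infinite geodesic from $y$ with $|\gamma \cap \gamma_y| = \infty$, and since two rays in a tree that meet infinitely often must eventually coincide, $\gamma$ and $\gamma_y$ share an infinite tail, so the chosen $w$ can be taken to lie on both rays. I would pick such a $w$ far enough along this common tail to be beyond the projections onto $\gamma$ of both $x$ and $y$, and beyond the projection of $x$ onto $\gamma_y$; this is possible because the tail is infinite while each of these three thresholds excludes only finitely many vertices. For this single $w$ the stabilisation formula (and its $\gamma_y$-analogue) hold simultaneously:
\[
L(x) = d(x,w) - d(o,w), \quad L(y) = d(y,w) - d(o,w), \quad L_y(x) = d(x,w) - d(y,w).
\]

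Finally I would combine the three identities:
\[
L(y) + L_y(x) = \bigl(d(y,w) - d(o,w)\bigr) + \bigl(d(x,w) - d(y,w)\bigr) = d(x,w) - d(o,w) = L(x),
\]
which is the asserted relation. The main obstacle is not the algebra of the last display but the two geometric inputs feeding it: proving the projection identity $d(x,w) = d(x,x') + d(x',w)$ (equivalently, that the geodesic from $x$ to a far vertex $w$ of $\gamma$ passes through the projection $x'$), and verifying that $\gamma$ and $\gamma_y$ genuinely share an infinite tail so that one vertex $w$ can be chosen to validate all three stabilisation formulas at once. Both are standard facts about geodesics and convex projection in trees, but they must be stated with care, and selecting a single $w$ that works for all three computations is precisely where the argument has to be assembled carefully.
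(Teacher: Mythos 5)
Your proposal is correct, but it takes a genuinely different route from the paper. The paper works directly from the definitions: writing $x',y'$ for the projections of $x,y$ onto $\gamma$, it splits into the two cases $x'\in\gamma_y$ and $x'\not\in\gamma_y$ (equivalently, $x'$ lies beyond $y'$ along $\gamma$ or not), identifies in each case the projection of $x$ onto $\gamma_y$ (namely $x'$ in the first case and $y'$ in the second), and verifies $L_y(x)=L(x)-L(y)$ by a short distance computation in each case. You instead prove the uniform stabilisation (Busemann) formula $L(x)=d(x,w)-d(o,w)$ for every $w$ far enough along $\gamma$, note that $\gamma$ and $\gamma_y$ share an infinite common tail, pick a single $w$ on that tail beyond all three relevant projections, and telescope; your derivation of the stabilisation formula from $L(x)=L(x')+d(x,x')$ and the projection identity $d(x,w)=d(x,x')+d(x',w)$ is carried out correctly, and your choice of a simultaneous $w$ (each threshold excludes only finitely many vertices of an infinite tail) is sound. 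What your organization buys is the disappearance of the case analysis --- it is absorbed once and for all into the convex-projection identity, rather than reappearing in the identification of the projection of $x$ onto $\gamma_y$ in each case --- together with a conceptual dividend: it exhibits $L$ as the horofunction of the end determined by $\gamma$, so the cocycle identity $L(x)=L(y)+L_y(x)$ becomes a one-line cancellation and the argument generalizes beyond $T_d$. What the paper's version buys is self-containment: it invokes nothing beyond the definition of $L$ and elementary distance bookkeeping, at the cost of checking two cases separately. Both proofs ultimately rest on the same two standard tree facts you explicitly flag (geodesics to points of $\gamma$ pass through the projection, and two rays with infinite intersection eventually coincide), so no gap remains in your plan once those are stated.
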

	
	\begin{proof}
	Let two vertices $x^\prime, y^\prime$ such that 
	$d(x, \gamma)=d(x,x^\prime), d(y, \gamma)=d(y,y^\prime)$.
	Then we have
	\begin{align*}
	L(x)&=-|x^\prime|+d(x,x^\prime),\\
	L(y)&=-|y^\prime|+d(y,y^\prime).
	\end{align*}
	If $x^\prime \in \gamma_y$, we have
	\begin{align*}
	L_y(x)&=-d(y,x^\prime)+d(x,x^\prime)
	=-d(y,y^\prime)+|y^\prime|-|y^\prime|-d(y^\prime, x^\prime)+d(x,x^\prime)\\
	&=-|x^\prime|+d(x,x^\prime)+|y^\prime|-d(y,y^\prime)
	=L(x)-L(y).
	\end{align*}
	If $x^\prime \not\in \gamma_y$, we have
	\begin{align*}
	L_y(x)&=-d(y,y^\prime)+d(y^\prime,x)
	=-d(y,y^\prime)+|y^\prime|-|y^\prime|+d(y^\prime, x^\prime)+d(x,x^\prime)\\
	&=-|x^\prime|+d(x,x^\prime)+|y^\prime|-d(y,y^\prime)
	=L(x)-L(y).
	\end{align*}
	It ends the proof.
	\end{proof}

	For $n \ge 0, z \in \mathbb{R}_{>0}$, we define $a_n(z)$ by
	\[
	a_n(z)=\sum_{|x|=n}z^{L(x)}.
	\]
	Stacey \cite{Stacey} has computed the number of vertices $x \in T_d$
	satisfying $|x|=n, L(x)=n-2t$:
	\[
	\begin{cases}
	b^n & (t=0)\\
	(b-1)b^{n-t-1} &(1\le t \le n-1)\\
	1 &(t=n)
	\end{cases}
	\]
	where $b=d-1$.
	By using this formula, Ligeett \cite{Liggett} showed the following equations.
	\begin{align*}
	a_n(z)
	&=(bz)^n +\sum_{t=1}^{n-1} (b-1)b^{n-t-1}z^{n-2t}+z^{-n}\\
	&=\begin{cases}
	\frac{b^{n-1}z^n (b^2 z^2 -1 )+ z^{-n} (z^2-1)}{bz^2-1} & (bz^2\not= -1)\\
	\sqrt{b}^n((n+1)-b^{-1}(n-1)) & (bz^2=1),
	\end{cases}\\
	a_n(1/bz)&=a_n(z).
	\end{align*}
	For $m\ge 0$, we define $J_m(p,z)$ by
	\[
	J_m(p,z)
	=\sum_{x \in T_d} \mathbb{P}_p(o \leftrightarrow (x,m))z^{L(x)}
	=\sum_{n \ge0} \mathbb{P}_p(o \leftrightarrow (n,m)) a_n(z).
	\]
	Now we consider $p<p_u$, that is, $\alpha(p)<1/\sqrt{b}$.
	Then we have $\alpha(p)<1/(\alpha(p)b)$.
	We know that $\mathbb{P}_p(o \leftrightarrow (n,m)) \le \alpha(p)^n$ holds.
	Then there exists a constant $C(p,z)$ which does not depend on $m$ such that
	\[
	J_m(p,z) \le C(p,z)
	\] for all $z \in (\alpha(p), 1/\alpha(p)b)$.
	Therefore, $J_m(p,z)$ is well-defined for $z \in (\alpha(p), 1/(\alpha(p)b)$.
	
	\begin{lem}
	For any $m,l \ge0$, we have $J_{m+l}(p,z) \le J_m(p,z)J_l(p,z)$.
	\end{lem}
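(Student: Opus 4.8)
The plan is to mimic the proof of Lemma \ref{lem:I_n}, splitting a connection from $o$ to $(x,m+l)$ at the intermediate layer $T_d \times \{m\}$ rather than at a tree-sphere, and to carry the weight $z^{L(x)}$ through the splitting by means of the additivity of the level function recorded in Lemma \ref{lem:level}. Throughout I work in the range $z \in (\alpha(p), 1/(\alpha(p)b))$, where $J_m(p,z)$ and $J_l(p,z)$ are finite, so that all rearrangements of nonnegative sums below are justified.

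First I would record the geometric decomposition. Since $m,l \ge 0$, any open path from $o=(o_{T_d},0)$ to $(x,m+l)$ has $\mathbb{Z}$-coordinate starting at $0$ and ending at $m+l \ge m \ge 0$, so it must visit the layer $T_d \times \{m\}$; taking its first visit, say at $(y,m)$, splits the path into two edge-disjoint pieces. Hence
\[
(o \leftrightarrow (x,m+l)) \subset \bigcup_{y \in T_d} (o \leftrightarrow (y,m)) \circ ((y,m) \leftrightarrow (x,m+l)),
\]
and both $(o \leftrightarrow (y,m))$ and $((y,m) \leftrightarrow (x,m+l))$ are connection events. Applying Lemma \ref{lem:BK+} together with a union bound over $y$ then yields
\[
\mathbb{P}_p(o \leftrightarrow (x,m+l)) \le \sum_{y \in T_d} \mathbb{P}_p(o \leftrightarrow (y,m)) \, \mathbb{P}_p((y,m) \leftrightarrow (x,m+l)).
\]

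Next I would identify the second factor with an $l$-shifted quantity. By translation invariance in the $\mathbb{Z}$-direction we have $\mathbb{P}_p((y,m) \leftrightarrow (x,m+l)) = \mathbb{P}_p((y,0) \leftrightarrow (x,l))$. Multiplying by $z^{L(x)}$, summing over $x$, and using $L(x)=L(y)+L_y(x)$ from Lemma \ref{lem:level}, I obtain
\[
J_{m+l}(p,z) \le \sum_{y \in T_d} \mathbb{P}_p(o \leftrightarrow (y,m)) \, z^{L(y)} \Big( \sum_{x \in T_d} \mathbb{P}_p((y,0) \leftrightarrow (x,l)) \, z^{L_y(x)} \Big).
\]
The inner sum is the analogue of $J_l(p,z)$ centred at $y$ with respect to the geodesic $\gamma_y$; by the homogeneity of $T_d$ there is an automorphism carrying $(y,\gamma_y)$ to $(o,\gamma)$, under which $L_y$ becomes $L$ and the connection probabilities are preserved, so the inner sum equals $J_l(p,z)$. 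The outer sum is then exactly $J_m(p,z)$, and $J_{m+l}(p,z) \le J_m(p,z)\,J_l(p,z)$ follows.

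The main obstacle is the bookkeeping in the final step: matching the weight $z^{L_y(x)}$ produced by Lemma \ref{lem:level} with the correct homogeneity of $T_d$ so that the $y$-based sum is recognised as $J_l$. Concretely, one must verify that the automorphism moving $y$ to $o$ can be chosen to fix the end determined by $\gamma$ (equivalently, to send $\gamma_y$ to $\gamma$), which is precisely what forces $L_y = L \circ g$ after the identification and is consistent with the shift $L_y = L - L(y)$ coming from Lemma \ref{lem:level}. Once this is in place, the BK splitting at the layer $T_d\times\{m\}$ and the $\mathbb{Z}$-translation are routine.
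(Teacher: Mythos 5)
Your proposal is correct and matches the paper's own proof: the paper likewise decomposes $(o \leftrightarrow (x,m+l))$ as a union over $y \in T_d$ of $(o \leftrightarrow (y,m)) \circ ((y,m) \leftrightarrow (x,m+l))$, applies Lemma \ref{lem:BK+}, and uses Lemma \ref{lem:level} together with the homogeneity of $T_d \Box \mathbb{Z}$ to recognise the inner sum as $J_l(p,z)$ and the outer sum as $J_m(p,z)$. Your write-up is in fact more careful than the paper's, which leaves the layer-crossing argument and the translation-invariance identification implicit.
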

	\begin{proof}
	By Lemma \ref{lem:BK+} and Lemma \ref{lem:level}, we have
	\begin{align*}
	J_{m+l}(p,z)
	&=\sum_{x \in T_d} \mathbb{P}_p \left( 
	\bigcup_{y \in T_d}(o \leftrightarrow (y,m)) \circ ((y,m) \leftrightarrow (x,m+l))
	\right)z^{L(x)}\\
	& \le \sum_{y \in T_d} \mathbb{P}_p (o \leftrightarrow (y,m)) z^{L(y)}
	\sum_{x \in T_d} \mathbb{P}_p ((y,m) \leftrightarrow (x,m+l)) z^{L_y(x)}\\
	&=J_m(p,z)J_l(p,z).
	\end{align*}
	\end{proof}
	From this lemma, we can define $\phi(p,z)$ by
	\[
	\phi(p,z) = \lim_{m \to \infty} J_m(p,z)^\frac{1}{m}
	=\inf_{ m\ge 0}  J_m(p,z)^\frac{1}{m}.
	\]
	By definition of $\phi(p,z)$, we have
	\[
	\phi(p,z)^m \le J_m(p,z).
	\]
	Since $L(o)=0$, 
	we have $\mathbb{P}_p(o \leftrightarrow (0,m)) \le J_m(p,z).$
	Then $\beta(p) \le \phi(p,z)$ holds.
	Therefore, if there exists $z$ such that 
	$\displaystyle \inf_{m \ge 0} J_m(p,z)<1$,
	then $\phi(p,z)<1$ holds.
	It leads that $\beta(p)<1$.
	Next lemma completes the proof of Lemma \ref{lem:beta<1}.

	\begin{lem}
	For any $z \in (\alpha(p), 1/\alpha(p)b)$, we have
	\[
	\inf_{m \ge 0}J_m(p,z)=0.
	\]
	\end{lem}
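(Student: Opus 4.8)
The plan is to estimate $J_m(p,z)$ by splitting its defining series at a large cutoff $N$ into a \emph{tail} $\sum_{n>N}$ and a \emph{head} $\sum_{n\le N}$. I would control the tail \emph{uniformly in $m$} using $\mathbb{P}_p(o\leftrightarrow(n,m))\le\alpha(p)^n$ together with the growth of $a_n(z)$, while I force the head to vanish along a suitable sequence of heights by invoking Lemma \ref{lem:beta.inf}. Throughout write $g_m=\mathbb{P}_p(o\leftrightarrow(0,m))$, so that Lemma \ref{lem:beta.inf} reads $\inf_{m\ge0}g_m=0$.

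First I would record the key \emph{uniform in $n$} estimate $\mathbb{P}_p(o\leftrightarrow(n,m))\le g_{2m}^{1/2}$. To see this, fix $x\in T_d$ with $|x|=n$; by FKG and the homogeneity of $T_d\Box\mathbb{Z}$,
\[
g_{2m}=\mathbb{P}_p\big((0,0)\leftrightarrow(0,2m)\big)
\ge \mathbb{P}_p\big((0,0)\leftrightarrow(x,m)\big)\,\mathbb{P}_p\big((x,m)\leftrightarrow(0,2m)\big),
\]
and the vertical reflection $(v,j)\mapsto(v,2m-j)$ is a graph automorphism preserving $\mathbb{P}_p$ and carrying the event $(x,m)\leftrightarrow(0,2m)$ to $(x,m)\leftrightarrow(0,0)$, so both factors equal $\mathbb{P}_p(o\leftrightarrow(n,m))$. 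I would also need $\inf_m g_{2m}=0$: given $\epsilon>0$, pick $m$ with $g_m<\epsilon$; if $m$ is even we are done, and if $m$ is odd then FKG supermultiplicativity $g_m\ge g_{m-1}g_1$ gives $g_{m-1}\le g_m/g_1<\epsilon/g_1$ with $m-1$ even and $g_1\ge p>0$.

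With these in hand, fix $z\in(\alpha(p),1/(\alpha(p)b))$. Since $p<p_u$ we have $\alpha(p)<1/\sqrt b$, so the interval is nonempty and $\alpha(p)\cdot\max(bz,1/z)<1$; this is exactly the estimate behind the uniform bound $J_m\le C(p,z)$, and it shows the tail $\sum_{n>N}\alpha(p)^n a_n(z)$ converges and tends to $0$ as $N\to\infty$, uniformly in $m$. Denote this tail by $\epsilon_N\to0$ and set $A_N=\sum_{n\le N}a_n(z)<\infty$. Using $\mathbb{P}_p(o\leftrightarrow(n,m))\le\alpha(p)^n$ on the tail and $\mathbb{P}_p(o\leftrightarrow(n,m))\le g_{2m}^{1/2}$ on the head,
\[
J_m(p,z)=\sum_{n\ge0}\mathbb{P}_p(o\leftrightarrow(n,m))\,a_n(z)\le A_N\,g_{2m}^{1/2}+\epsilon_N .
\]
Taking the infimum over $m$ and using $\inf_m g_{2m}=0$ yields $\inf_m J_m(p,z)\le\epsilon_N$ for every $N$; letting $N\to\infty$ gives $\inf_m J_m(p,z)=0$, as required.

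The main obstacle is the tail control, which must be uniform in $m$ and therefore forces the comparison between the exponential growth rate $\max(bz,1/z)$ of $a_n(z)$ and the decay rate $\alpha(p)$ of $\mathbb{P}_p(o\leftrightarrow(n,0))$. This is precisely where the hypothesis $p<p_u$ (hence $\alpha(p)<1/\sqrt b$) and the choice $z\in(\alpha(p),1/(\alpha(p)b))$ enter, as together they are equivalent to $\alpha(p)\max(bz,1/z)<1$. A secondary technical point is the factor $2$ appearing in $g_{2m}^{1/2}$, which is dispatched by the elementary observation $\inf_m g_{2m}=0$ derived above from Lemma \ref{lem:beta.inf}.
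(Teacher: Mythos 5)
Your proof is correct, and although it follows the same skeleton as the paper's proof --- split $J_m(p,z)=\sum_{n\ge0}a_n(z)\,\mathbb{P}_p(o\leftrightarrow(n,m))$ at a cutoff $N$, control the tail uniformly in $m$, and kill the head via Lemma \ref{lem:beta.inf} --- both halves are implemented differently. For the tail, the paper first reduces to $z\ge1/\sqrt b$ by the symmetry $a_n(1/bz)=a_n(z)$, introduces an auxiliary $z_0\in(z,1/\alpha(p)b)$, shows $a_n(z)/a_n(z_0)\to0$ from the closed formula for $a_n$, and bounds the tail by $\epsilon J_m(p,z_0)/C(p,z_0)\le\epsilon$; you instead bound it directly by $\epsilon_N=\sum_{n>N}\alpha(p)^n a_n(z)$, which tends to $0$ because $z\in(\alpha(p),1/\alpha(p)b)$ is equivalent to $\alpha(p)\max(bz,1/z)<1$ --- the very computation the paper already performs to obtain $J_m(p,z)\le C(p,z)$. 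This is simpler and loses nothing, since the pointwise bound $\mathbb{P}_p(o\leftrightarrow(n,m))\le\alpha(p)^n$ is available (it is established in Section \ref{sc:beta}). For the head, the paper's final display in effect moves $\inf_m$ inside the finite sum, writing $\inf_m\sum_{n<N}a_n(z)\mathbb{P}_p(o\leftrightarrow(n,m))\le\sum_{n<N}a_n(z)\inf_m\mathbb{P}_p(o\leftrightarrow(n,m))$; that interchange goes the wrong way (the infimum of a sum dominates the sum of the infima), so what is really needed is that the finitely many connection probabilities become small \emph{simultaneously} along one sequence of heights. Your uniform-in-$n$ bound $\mathbb{P}_p(o\leftrightarrow(n,m))\le g_{2m}^{1/2}$ --- the same FKG-plus-reflection trick the paper uses in Section \ref{sc:beta} to derive $\mathbb{P}_p(o\leftrightarrow(n,m))\le\beta(p)^m$ --- together with the even/odd reduction $g_{m-1}\le g_m/g_1$ with $g_1\ge p$, supplies exactly this simultaneity, so your version also quietly repairs that small gap. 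The only point you leave implicit is the assumption $p>0$ when dividing by $g_1$; the case $p=0$ is trivial, since then $J_m(p,z)=0$ for all $m\ge1$.
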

	\begin{proof}
	Since $a_n(1/bz)=a_n(z)$, 	we have $J_m(p,1/bz)=J_m(p,z)$.	
	Then we only consider $z \in [1/\sqrt{b}, 1/\alpha(p)b)$.
	For any $z_0 \in (z, 1/\alpha(p)b)$,
	we have
	\begin{align*}
	\frac{a_n(z)}{a_n(z_0)}
	&= \frac{bz_0^2-1}{bz^2-1} \cdot \frac{b^{n-1}z^n(b^2z^2-1)+ z^{-n} (z^2-1)}
	{b^{n-1}z_0^n(b^2z_0^2-1)+ z_0^{-n} (z_0^2-1)}\\
	&=\frac{bz_0^2-1}{bz^2-1} \cdot \frac{b^{-1}(b^2z^2-1)+ (1/bz^2)^n (z^2-1)}
	{b^{-1}(z_0/z)^n(b^2z_0^2-1)+ (1/bzz_0)^n (z_0^2-1)}\\
	&\to 0 \text{\qquad as $n \to \infty$}.
	\end{align*}
	Therefore, for any $z_0\in (z, 1/\alpha(p)b), \epsilon>0$, 
	there exists $N \in \mathbb{N}$ such that 
	\[
	\frac{a_n(z)}{a_n(z_0)}\le \frac{\epsilon}{C(p,z_0)}
	\]
	for all $n \ge N$, where $C(p,z_0)$ is a constant which does not depend on $m$
	such that $J_m(p.z_0) \le C(p,z_0)$.
	Then we have
	\begin{align*}
	J_m(p,z)
	&= \sum_{n \ge 0} a_n(z) \mathbb{P}_p(o \leftrightarrow (n,m))\\
	&= \sum_{n \ge N} a_n(z_0) \cdot \frac{a_n(z)}{a_n(z_0)} 
	\mathbb{P}_p(o \leftrightarrow (n,m))
	+ \sum_{n <N} a_n(z) \mathbb{P}_p(o \leftrightarrow (n,m))\\
	&\le \frac{\epsilon}{C(p,z_0)} J_m(p, z_0)
	+\sum_{n <N} a_n(z) \mathbb{P}_p(o \leftrightarrow (n,m)).
	\end{align*}
	From Lemma \ref{lem:beta.inf} we have
	\[
	\inf_{m \ge 0} J_m(p,z) \le \epsilon
	+ \sum_{n<N}a_n(z) \inf_{m \ge 0} \mathbb{P}_p(o \leftrightarrow (n,m))
	= \epsilon.
	\]
	\end{proof}

{}
\noindent
{Mathematical Institute \\
 Tohoku University \\
 Sendai 980-8578 \\
 Japan\\
E-mail:kohei.yamamoto.t1@dc.tohoku.ac.jp}

\end{document}